\newtheorem{thm}{Theorem}[section]
\newtheorem{pro}[thm]{Proposition}
\newtheorem{ex}[thm]{Example}
\newtheorem{rmk}[thm]{Remark}
\newtheorem{defi}[thm]{Definition}
\newcommand {\emptycomment}[1]{}
\newcommand{\lon }{\,\rightarrow\,}
\newcommand{\be }{\begin{equation}}
\newcommand{\ee }{\end{equation}}
\newcommand{\g}{\mathfrak g}
\newcommand{\h}{\mathfrak h}
\newcommand{\huaB}{\mathcal{B}}
\newcommand{\huaF}{\mathcal{F}}
\newcommand{\huaD}{\mathcal{D}}
\newcommand{\huaH}{\mathcal{H}}
\newcommand{\GL}{\mathrm{GL}}
\newcommand{\dM}{\mathrm{d}}
\newcommand{\VE}{\mathsf{VE}}
\newcommand{\br}[1]{   [ \cdot,    \cdot  ]   }
\newcommand{\CE}{\mathsf{CE}}
\newcommand{\Hom}{\mathrm{Hom}}
\newcommand{\Der}{\mathrm{Der}}
\newcommand{\Ad}{\mathrm{Ad}}
\newcommand{\Aut}{\mathrm{Aut}}
\newcommand{\gl}{\mathfrak {gl}}
\newcommand{\ad}{\mathrm{ad}}
\newcommand{\Sym}{\mathsf{S}}
\begin{document}

\title{Deformations of crossed homomorphisms on Lie groups}

\author{Jun Jiang}
\address{Department of Mathematics, Jilin University, Changchun 130012, Jilin, China}
\email{junjiang@jlu.edu.cn}


\begin{abstract}
In this paper, we study deformations of crossed homomorphisms on Lie groups by means of the cohomology which controls them. Using the Moser type argument, we obtain several rigidity results of crossed homomorphisms on Lie groups. We further investigate the relationship between the cohomology of crossed homomorphisms on Lie groups and that on Lie algebras. Finally, we characterize the rigidity properties of all crossed homomorphisms with respect to the conjugation action on the connected and simply connected two-dimensional Lie group.
\end{abstract}


\keywords{crossed homomorphism, deformation, rigidity, cohomology.\\
 \quad {\em 2020 Mathematics Subject Classification.} 22E15, 22E60, 58H15}

\maketitle



\allowdisplaybreaks


\section{Introduction}
For a given algebraic or geometric structure, a central problem is that describe a neighborhood
of the structure within its moduli space. Studying deformations provides a powerful approach to this problem. Roughly speaking, a deformation of a structure is a small curve through the original structure in the moduli space. Cohomology theory offers essential tools for analyzing deformations: taking the differential of a deformation yields a
cocycle. Rigidity is a natural question need to be considered in studying deformations, and it can be characterized in terms of relevant cohomology groups. In differential geometry, the Moser argument is a classical method for investigating rigidity. Cohomological methods and Moser type arguments are similarly used to study deformations and rigidity phenomena of morphisms of given structures. For instance, cohomological approaches to deformations of Lie group homomorphisms appear in \cite{NR, CI}.

The purpose of this paper is to develop the cohomology theory governing deformations of crossed homomorphisms on Lie groups and use it to establish several
rigidity results. Our approach is an adaptation of the Moser argument to the context of deformations of crossed homomorphisms on Lie groups.

Crossed homomorphisms (also known as 1-cocycles or difference operators) on groups were first introduced by J.H.C. Whitehead in the study of relative homotopy groups\cite{Whi}. These operators found wide applications in non-abelian Galois cohomology\cite{Se} and the theory of Banach modules of locally compact groups\cite{Da}. In \cite{LYZ}, the authors constructed set-theoretical solutions of the Yang-Baxter equation using bijective crossed homomorphisms. Recently, crossed homomorphisms were used to study Hopf-Galois structures \cite{Tsang} and construct representations of mapping class groups of surfaces \cite{CS,Kasahara}. The concept of crossed homomorphisms on Lie algebras was introduced in \cite{Lue} during the study of non-abelian extensions of Lie algebras, and had been applied to study representations of Cartan type Lie algebras\cite{PSTZ}. By differentiation, crossed homomorphisms on Lie groups give rise to crossed homomorphisms on the corresponding Lie algebras. Conversely, crossed homomorphisms on Lie algebras can be integrated into crossed homomorphisms on the corresponding connected and simply connected Lie groups\cite{JS}.

Given a crossed homomorphism $D:G\lon H$ with respect to an action $\Theta:G\lon\Aut(H)$, i.e.,
$$
D(a\cdot_G b)=D(a)\cdot_H\Big(\Theta(a)D(b)\Big), \quad \forall a, b\in G.
$$
\begin{itemize}
\item We construct a representation $\Theta_D: G\lon\GL(\h)$ and define the cohomology of $D$ as the group cohomology of $G$ with coefficients in the representation $(\h,\Theta_D)$(Proposition \ref{reptd}). We establish the van Est isomorphism theorem relating the cohomology of crossed homomorphisms on Lie groups and the cohomology of crossed homomorphisms on Lie algebras(Theorem \ref{vanE}). We then introduce a deformation $D_s$ of $D$ and show that the smooth map $\hat{D}_s: G\lon\h$ defined by
$$
\hat{D}_{s}(a)=(R_{D_{s}(a)^{-1}})_{*D_{s}(a)}\frac{d}{dt}|_{t=0}D_{t+s}(a), \quad \forall a\in G.
$$
is a $1$-cocycle(Proposition \ref{eqcoD}).
\item A deformation $D_s$ of $D$ is said to be trivial on an interval $I=(-\varepsilon, \varepsilon)\subset\mathbb{R}$ if there exists a smooth map $\tau:I\lon H$ satisfying $$
\tau(0)=e_H, \quad D_t(a)=\tau(t)\cdot_HD(a)\cdot_H\Theta(a)\tau(t)^{-1}, \quad \forall a\in G,~t\in I.
$$
If the deformation $D_s$ is trivial on $I$, then the cohomology class $[\hat{D}_s]\in\huaH^{1}(D_s)$ vanishes smoothly for all $s\in I$ (Theorem \ref{trivial}). Conversely, if the cohomology classes $[\hat{D}_s]$ vanish smoothly for all $s\in\mathbb{R}$, then for each $a\in G$, there exists an interval $I_a=(-\varepsilon_a, \varepsilon_a)$ such that
$$
D_t(a)=\tau(t)\cdot_HD(a)\cdot_H\Theta(a)\tau(t)^{-1}, \quad \forall a\in G,~t\in I_a.
$$
The deformation $D_s$ is called to be pointwise trivial (since $I_a=(-\epsilon_a, \epsilon_a)$ depends on the point $a$)(Theorem \ref{trivial}). In particular, for a compact Lie group $G$, any deformation of $D: G\lon H$ is trivial(Proposition \ref{ctri}).
\item For the two-dimensional connected and simply connected Lie group $G$, there exists three classes of crossed homomorphisms with respect to the conjugation action (Proposition \ref{classfy}). By analyzing whether all deformations of crossed homomorphisms are trivial, we establish complete rigidity conditions for every crossed homomorphism with respect to the conjugation action on $G$(Theorems \ref{def1}, \ref{def3} and \ref{def2}).
\end{itemize}

The structure of the paper is the following. In Section $2$, we develop the cohomology theory for crossed homomorphisms on Lie groups. In Section $3$, we introduce deformations of these operators and prove several rigidity theorems. In Section $4$, we end this paper with characterizing the rigidity properties of all crossed homomorphisms with respect to the conjugation action on the connected and simply connected two-dimensional Lie group.

\vspace{2mm}
\noindent
{\bf Acknowledgements. }We give our warmest thanks to Yunhe Sheng for very helpful discussions. This research is supported by NSFC(12401076), China Postdoctoral Science Foundation (2023M741349).

\section{Cohomology of crossed homomorphisms on Lie groups}\label{22}
Let $(G, e_G, \cdot_G)$ and $(H, e_H, \cdot_H)$ be Lie groups. Denote by $(\h, [\cdot, \cdot]_\h)$ the Lie algebra of $(H, e_H, \cdot_H)$. We briefly recall some differential calculations in $G$ and $H$ that will be useful for our purposes.

Define a smooth map $m: G\times G\lon G$ by
\begin{equation*}
m(a, b)=a\cdot_G b, \quad \forall a, b\in G.
\end{equation*}
Denote by $L_a=m(a, \cdot): G\lon G$ and $R_b=m(\cdot, b):G\lon G$. For smooth curves $\gamma_1:\mathbb{R}\lon G$ and $\gamma_2: \mathbb{R}\lon G$, the following equation holds
\begin{eqnarray}
\label{defileib}\frac{d}{dt}|_{t=0}m(\gamma_1(t), \gamma_2(t))&=&\frac{d}{dt}|_{t=0}m(\gamma_1(0), \gamma_2(t))+\frac{d}{dt}|_{t=0}m(\gamma_1(t), \gamma_2(0))\\
\nonumber&=&{L_{\gamma_1(0)}}_{*\gamma_2(0)}\frac{d}{dt}|_{t=0}\gamma_2(t)+{R_{\gamma_2(0)}}_{*\gamma_1(0)}\frac{d}{dt}|_{t=0}\gamma_1(t),
\end{eqnarray}
where ${L_{\gamma_1(0)}}_{*\gamma_2(0)}$ and ${R_{\gamma_2(0)}}_{*\gamma_1(0)}$ are tangent maps of $L_{\gamma_1(0)}$ and $R_{\gamma_2(0)}$ at $\gamma_2(0)$ and $\gamma_1(0)$ respectively.

For the Lie group $(H, e_H, \cdot_H)$, denote by $\Ad: H\lon\Aut(\h)$ the adjoint action of $(H, e_H, \cdot_H)$ on $\h$. Then
\begin{equation}\label{defiad}
\Ad(h)u=\frac{d}{dt}|_{t=0}h\cdot_H\exp(tu)\cdot_H h^{-1}, \quad\forall h\in H, ~u\in\h,
\end{equation}
which means that $\Ad(h)u={(L_{h})}_{*h^{-1}}{(R_{h^{-1}})}_{*e_H}u={(R_{h^{-1}})}_{*h}{(L_{h})}_{*e_H}u$. Moreover, the linear map $\Ad_{*e_H}:\h\lon\Der(\h)$ is
\begin{equation*}
\Big(\Ad_{*e_H}(u)\Big)(v)=[u, v]_\h, \quad \forall u, v\in\h.
\end{equation*}

\emptycomment{
Let $\Theta: G\lon \Aut(H)$ be an action of $(G, e_G, \cdot_G)$ on $(H, e_H, \cdot_H)$. As $\Theta(a)\in\Aut(H)$ for all $a\in G$, then $\Theta(a)_{*e_H}:\h\lon\h$ is a Lie algebra isomorphism. By $\Theta(a\cdot_G b)=\Theta(a)\Theta(b)$, we have that $\Theta(a\cdot_G, b)_{*e_H}=\Theta(a)_{*e_H}\Theta(b)_{*e_H}$. Thus we obtain a Lie group homomorphism from $G$ to $\Aut(\h)$, which we denote by $\tilde{\Theta}: G\lon\Aut(\h)$. Then
\begin{equation}\label{defitheta}
\tilde{\Theta}(a)x=\frac{d}{dt}|_{t=0}\Theta(a)\gamma(t), \quad \forall a\in G,
\end{equation}
where $\gamma(0)=e_H$ and $\frac{d}{dt}|_{t=0}\gamma(t)=x$.
}
\subsection{Cohomology of crossed homomorphisms on Lie groups}
In this subsection, we introduce cohomology of crossed homomorphisms on Lie groups.
\begin{defi}{\rm(\cite{Whi})}
Let $(G, e_G, \cdot_G)$ and $(H, e_H, \cdot_H)$ be Lie groups and $\Theta: G\lon \Aut(H)$ be an action of $(G, e_G, \cdot_G)$ on $(H, e_H, \cdot_H)$. A smooth map $D:G\lon H$ is called a {\bf crossed homomorphism} with respect to the action $\Theta$ on Lie groups if it satisfies the following equation
\begin{equation}\label{deficro}
D(a\cdot_G b)=D(a)\cdot_H\Big(\Theta(a)D(b)\Big), \quad \forall a, ~b\in G.
\end{equation}

In particular, a smooth map $D:G\lon G$ is called a crossed homomorphism with respect to the conjugation action on $G$ if it satisfies the following equation
\begin{equation*}
D(a\cdot_G b)=D(a)\cdot_G a\cdot_G D(b)\cdot_G a^{-1}, \quad \forall a, ~b\in G.
\end{equation*}
\end{defi}

\begin{ex}\label{Ex1}
Let $(G, e_G, \cdot_G)$ be a Lie group. Then the inverse map $(\cdot)^{-1}: G\lon G$ is a crossed homomorphism with respect to the conjugation action.
\end{ex}

\begin{ex}
Let $(G, e_G, \cdot_G)$ and $(H, e_H, \cdot_H)$ be Lie groups and $\Theta: G\lon \Aut(H)$ be an action of $(G, e_G, \cdot_G)$ on $(H, e_H, \cdot_H)$. Let $\huaB: H\lon G$ be a relative Rota-Baxter operator, i.e. $\huaB:H\to G$ is a smooth map satisfying
$$
\huaB(h_1)\cdot_G\huaB(h_2)=\huaB(h_1\cdot_H\Theta(\huaB(h_1))h_2),\quad \forall h_1,h_2\in H.
$$  If $\huaB $ is invertible, then $\huaB^{-1}: G\lon H$ is a crossed homomorphism with respect to the action $\Theta$ on Lie groups.
\end{ex}

 Now we are ready to define a cohomology theory for crossed homomorphisms on Lie groups. First we recall a standard
 version of the smooth cohomology of a Lie group $G$ with coefficients in a representation $(V, \Pi)$ (see e.g. \cite{Wei}). An $n$-cochain is a smooth map $$\alpha_n:
\underbrace{G\times\cdots\times G}_n\lon V.$$  The set of $n$-cochains
forms an abelian group, which will be denoted by $C^{n}(G, V)$. The differential $\dM: C^{n}(G, V)\rightarrow C^{n+1}(G, V)$ is defined by
\begin{eqnarray*}
\dM(\alpha_n)(a_1,\cdots,a_n,a_{n+1})&=&\Pi(a_1)\alpha_n(a_2, \cdots, a_n, a_{n+1})+(-1)^{n+1}\alpha_{n}(a_1,\cdots,a_n)\\
&&+\sum_{i=1}^{n}(-1)^{i}\alpha_n(a_1,\cdots,a_{i-1}, a_i\cdot_{G}a_{i+1}, a_{i+2}, \cdots a_{n+1}).
\end{eqnarray*}

Let $D: G\lon H$ be a crossed homomorphism with respect to an action $\Theta$ on Lie groups. Denote by $\h$ the Lie algebra of the Lie group $(H, e_H, \cdot_H)$. Since $\Theta(a)\in\Aut(H)$ for each $a\in G$, the induced linear map $\Theta(a)_{*e_H}:\h\lon\h$ is a Lie algebra isomorphism. By $\Theta(a\cdot_G b)=\Theta(a)\Theta(b)$, we have that $\Theta(a\cdot_G b)_{*e_H}=\Theta(a)_{*e_H}\Theta(b)_{*e_H}$. Thus we obtain a Lie group homomorphism $\tilde{\Theta}: G\lon\Aut(\h)$ defined by
\begin{equation}\label{defitheta}
\tilde{\Theta}(a)x=\frac{d}{dt}|_{t=0}\Theta(a)\gamma(t), \quad \forall a\in G,
\end{equation}
where $\gamma(0)=e_H$ and $\frac{d}{dt}|_{t=0}\gamma(t)=x$. The crossed homomorphism $D$ with respect to an action $\Theta$ on Lie groups induces a smooth map $\Theta_D: G\lon\GL(\h)$ given by
\begin{equation}\label{defireptd}
\Theta_{D}(a)u=\Ad_{D(a)}\tilde{\Theta}(a)u, \quad\forall a\in G, ~u\in\h.
\end{equation}

\begin{pro}\label{reptd}
With the above notations, the map $\Theta_D: G\lon\GL(\h)$ is a representation of $(G, e_G, \cdot_G)$ on $\h$.
\end{pro}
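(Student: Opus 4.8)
The plan is to verify the two defining properties of a representation of $G$ on $\h$: that $\Theta_D$ is multiplicative, i.e.\ $\Theta_D(a\cdot_G b)=\Theta_D(a)\Theta_D(b)$ for all $a,b\in G$, and that $\Theta_D(e_G)=\Id$. Smoothness of $\Theta_D$ is immediate, since it is built from $D$, $\Ad$ and $\tilde{\Theta}$, all of which are smooth.

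The key algebraic input, which I expect to be the heart of the argument, is an equivariance identity between $\Ad$ and the differential of an automorphism. Specifically, for any Lie group automorphism $\phi:H\lon H$ with differential $\phi_{*e_H}:\h\lon\h$ and any $h\in H$, one has $\phi_{*e_H}\circ\Ad_{h}=\Ad_{\phi(h)}\circ\phi_{*e_H}$. This follows by applying $\phi$ to the curve $h\cdot_H\exp(tu)\cdot_H h^{-1}$ appearing in the definition \eqref{defiad}: since $\phi$ is a homomorphism, $\phi(h\cdot_H\exp(tu)\cdot_H h^{-1})=\phi(h)\cdot_H\exp(t\phi_{*e_H}u)\cdot_H\phi(h)^{-1}$, and differentiating at $t=0$ yields the claimed identity. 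Taking $\phi=\Theta(a)$, so that $\phi_{*e_H}=\tilde{\Theta}(a)$, and $h=D(b)$ gives $\tilde{\Theta}(a)\circ\Ad_{D(b)}=\Ad_{\Theta(a)D(b)}\circ\tilde{\Theta}(a)$.

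With this in hand, multiplicativity becomes a direct computation. Using the crossed homomorphism relation $D(a\cdot_G b)=D(a)\cdot_H\big(\Theta(a)D(b)\big)$ together with the fact that $\Ad$ is a group homomorphism, we get $\Ad_{D(a\cdot_G b)}=\Ad_{D(a)}\circ\Ad_{\Theta(a)D(b)}$; combined with $\tilde{\Theta}(a\cdot_G b)=\tilde{\Theta}(a)\tilde{\Theta}(b)$, this yields $\Theta_D(a\cdot_G b)=\Ad_{D(a)}\circ\Ad_{\Theta(a)D(b)}\circ\tilde{\Theta}(a)\circ\tilde{\Theta}(b)$. Inserting the equivariance identity of the previous paragraph to rewrite $\Ad_{\Theta(a)D(b)}\circ\tilde{\Theta}(a)=\tilde{\Theta}(a)\circ\Ad_{D(b)}$ then collapses this to $\Ad_{D(a)}\tilde{\Theta}(a)\Ad_{D(b)}\tilde{\Theta}(b)=\Theta_D(a)\Theta_D(b)$, as required.

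Finally, to check the unit I would first observe that setting $a=b=e_G$ in \eqref{deficro}, together with $\Theta(e_G)=\Id_H$, forces $D(e_G)=D(e_G)\cdot_H D(e_G)$, hence $D(e_G)=e_H$; therefore $\Theta_D(e_G)=\Ad_{e_H}\tilde{\Theta}(e_G)=\Id$. The only genuinely nontrivial ingredient is the equivariance lemma, which encodes the compatibility of the adjoint action with the twisting by $\Theta$; the remaining steps are formal manipulations relying on $\Ad$ and $\tilde{\Theta}$ being homomorphisms and on the defining relation \eqref{deficro}.
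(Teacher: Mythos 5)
Your proof is correct, but it packages the argument differently from the paper's. The paper proves multiplicativity in a single curve computation: it writes $\Theta_D(a\cdot_G b)u=\Ad_{D(a\cdot_G b)}\tilde{\Theta}(a\cdot_G b)u$ as the derivative at $t=0$ of the curve $D(a\cdot_G b)\cdot_H\big(\Theta(a\cdot_G b)\exp(tu)\big)\cdot_H D(a\cdot_G b)^{-1}$, substitutes the crossed homomorphism relation \eqref{deficro} together with $\Theta(a\cdot_G b)=\Theta(a)\Theta(b)$ to regroup this curve as $D(a)\cdot_H\Theta(a)\big(D(b)\cdot_H\Theta(b)\exp(tu)\cdot_H D(b)^{-1}\big)\cdot_H D(a)^{-1}$, and then applies the chain rule to recognize the result as $\Ad_{D(a)}\tilde{\Theta}(a)\big(\Ad_{D(b)}\tilde{\Theta}(b)u\big)$. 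You instead isolate an explicit equivariance lemma, $\phi_{*e_H}\circ\Ad_h=\Ad_{\phi(h)}\circ\phi_{*e_H}$ for an automorphism $\phi$ of $H$, and then finish by operator algebra using that $\Ad$ and $\tilde{\Theta}$ are homomorphisms. The two arguments encode exactly the same compatibility: your lemma with $\phi=\Theta(a)$, $h=D(b)$ is precisely what the paper's regrouping-plus-chain-rule step amounts to, so the difference is one of organization rather than substance — your version makes the reusable identity explicit and works at the level of linear maps on $\h$, while the paper's is a one-shot differentiation of curves. One genuine addition on your side: you verify the unit axiom $\Theta_D(e_G)=\Id$ (via $D(e_G)=e_H$, forced by \eqref{deficro}), a point the paper leaves implicit; this is a worthwhile completeness check, since multiplicativity alone does not give $\Theta_D(e_G)=\Id$ without knowing each $\Theta_D(a)$ is invertible, which in turn follows from $\Ad_{D(a)}$ and $\tilde{\Theta}(a)$ being invertible.
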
\begin{proof}
For any $a, b\in G, u\in h$, by \eqref{defiad}, \eqref{deficro} and \eqref{defitheta}, we obtain
\begin{eqnarray*}
\Ad_{D(a\cdot_G b)}\tilde{\Theta}(a\cdot_G b)u&=&\frac{d}{dt}|_{t=0}D(a\cdot_G b)\cdot_H\Big(\Theta(a\cdot_G b)\exp(tu)\Big)\cdot_HD(a\cdot_G b)^{-1}\\
&=&\frac{d}{dt}|_{t=0}D(a)\cdot_H\Theta(a)\Big(D(b)\cdot_H\Theta(b)\exp(tu)\cdot_HD(b)^{-1}\Big)\cdot_HD(a)^{-1}\\
&=&\Ad_{D(a)}\tilde{\Theta}(a)\Big(\frac{d}{dt}|_{t=0}D(b)\cdot_H\Theta(b)\exp(tu)\cdot_HD(b)^{-1}\Big)\\
&=&\Ad_{D(a)}\tilde{\Theta}(a)\Big(\Ad_{D(b)}\tilde{\Theta}(b)u\Big),
\end{eqnarray*}
which means that $\Theta_{D}: G\lon\GL(\h)$ defined by \eqref{defireptd} is a representation of $(G, e_G, \cdot_G)$ on $\h$.
\end{proof}

Define the space of $0$-cochains by $\h$. For $k\geq1$, define the space of $k$-cochains $$C^k(D)=\{\alpha_k:
\underbrace{G\times\cdots\times G}_k\lon \h|\alpha_k~~\text{is a smooth map}\}.$$
The differential $\dM^D: C^{k}(D)\rightarrow C^{k+1}(D)$ is defined by
\begin{eqnarray}
\label{deficob}&&\dM^D\alpha_k(a_1, \cdots, a_{k+1})\\
\nonumber&=&\Theta_D(a_1)\alpha_k(a_2, \cdots, a_{k+1})+(-1)^{k+1}\alpha_{k}(a_1,\cdots,a_k)\\
\nonumber&&+\sum_{i=1}^{k}(-1)^{i}\alpha_k(a_1,\cdots,a_{i-1}, a_i\cdot_{G}a_{i+1}, a_{i+2}, \cdots, a_{k+1}).
\end{eqnarray}
We know that the smooth map $\dM^D$ is the coboundary operator of the Lie group $(G, e_G, \cdot_G)$ with coefficients in the representation $(\h, \Theta_D)$, which means that $\dM^D\circ\dM^D=0$. Thus $(\oplus_{k=1}^{+\infty}C^k(D), \dM^D)$ is a cochain complex, i.e. $\dM^D\circ\dM^D=0$.

\begin{defi}
Let $D: G\lon H$ be a crossed homomorphism with respect to an action $\Theta$ on Lie groups. The cohomology of the cochain complex  $(C^*(D)=\oplus_{k=1}^{+\infty}C^k(D), \dM^D)$ is defined to be the {\bf cohomology for the crossed homomorphism $D$}.
\end{defi}
Denote by $Z^{k}(D)$ the set of $k$-cocycles and $\huaH^{k}(D)$ the $k$-th cohomology group.
\begin{rmk}
For a crossed homomorphism $D:G\lon G$ with respect to the the conjugation action on $G$, the corresponding cohomology groups coincide with those defined for the adjoint representation in \cite{JLS}.
\end{rmk}

\subsection{The van Est map}
In this subsection, we show that the cohomology of crossed homomorphisms on Lie groups is the global analogue of the cohomology of crossed homomorphisms on Lie algebras.

Let $(\g, [\cdot, \cdot]_\g)$ and $(\h, [\cdot, \cdot]_\h)$ be Lie algebras and $\theta:\g\lon\Der(\h)$ be an action of $\g$ on $\h$.
\begin{defi}{\rm(\cite{Lue})}
A linear map $\mathfrak{d}:\g\lon\h$ is called a {\bf crossed homomorphism} with respect to the action $\theta$ on Lie algebras if
\begin{equation*}
\frak{d}([x, y]_\g)=\theta(x)\frak{d}(y)-\theta(y)\frak{d}(x)+[\frak{d}(x), \frak{d}(y)]_{\h}, \quad \forall x,y\in\g.
\end{equation*}

In particular, a linear map $\frak{d}:\g\lon \g$ is called a crossed homomorphism on $\g$ if it satisfies the following equation
\begin{equation*}
\frak{d}([x, y]_\g)=[\frak{d}(x), y]_\g+[x, \frak{d}(y)]_\g+[\frak{d}(x), \frak{d}(y)]_\g, \quad \forall x, ~y\in \g.
\end{equation*}
\end{defi}

Let $(G, \cdot_G, e_G)$ and $(H, \cdot_H, e_H)$ be connected and simply connected Lie groups. Denote by $(\g, [\cdot, \cdot]_\g)$ and $(\h, [\cdot, \cdot]_\h)$ the Lie algebras of $(G, \cdot_G, e_G)$ and $(H, \cdot_H, e_H)$ respectively. Let $\Theta: G\lon\Aut(H)$ be an action of $G$ on $H$. Denote by $\theta:\g\lon\Der(\h)$ the action of $\g$ on $\h$ corresponding to $\Theta: G\lon\Aut(H)$. The correspondence between Lie group homomorphisms and their induced Lie algebra homomorphisms naturally extends to crossed homomorphisms.
\begin{thm}{\rm(\cite{GLS,JS})}
Let $D:G\lon H$ be a crossed homomorphism with respect to the action $\Theta$ on Lie groups. Then $\frak{d}=D_{*e_G}:\g\lon\h$ the tangent map of $D$ is a crossed homomorphism with respect to the action $\theta$ on Lie algebras.

Conversely, let $\frak{d}:\g\lon\h$ be a crossed homomorphism with respect to the action $\theta$ on Lie algebras. Then there exists a crossed homomorphism $D:G\lon H$ with respect to the action $\Theta$ on Lie groups such that $D_{*e_G}=\frak{d}$.
\end{thm}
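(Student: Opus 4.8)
The plan is to reduce both directions to the classical Lie correspondence for honest group homomorphisms by means of a semidirect product. The key observation I would use is that a smooth map $D: G\lon H$ is a crossed homomorphism with respect to $\Theta$ if and only if the map $\hat D: G\lon H\rtimes_\Theta G$ defined by $\hat D(a)=(D(a), a)$ is a Lie group homomorphism, where $H\rtimes_\Theta G$ is a Lie group since $\Theta$ is smooth. Indeed, in $H\rtimes_\Theta G$ the multiplication reads $(h_1, a_1)\cdot(h_2, a_2)=\big(h_1\cdot_H\Theta(a_1)h_2,~a_1\cdot_G a_2\big)$, so that $\hat D(a)\cdot\hat D(b)=\big(D(a)\cdot_H\Theta(a)D(b),~a\cdot_G b\big)$, and this equals $\hat D(a\cdot_G b)=(D(a\cdot_G b), a\cdot_G b)$ exactly when \eqref{deficro} holds. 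The Lie algebra of $H\rtimes_\Theta G$ is the semidirect product $\h\rtimes_\theta\g$ with bracket $[(u, x), (v, y)]=\big([u, v]_\h+\theta(x)v-\theta(y)u,~[x, y]_\g\big)$.

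For the first (differentiation) statement, I would assume $D$ is a crossed homomorphism, so $\hat D$ is a Lie group homomorphism and its tangent map $\hat D_{*e_G}:\g\lon\h\rtimes_\theta\g$ is a Lie algebra homomorphism. Writing $\mathfrak{d}=D_{*e_G}$, one reads off from $\hat D(a)=(D(a), a)$ that $\hat D_{*e_G}(x)=(\mathfrak{d}(x), x)$. Comparing the two sides of $\hat D_{*e_G}([x, y]_\g)=[\hat D_{*e_G}(x), \hat D_{*e_G}(y)]$ in the bracket above and extracting the $\h$-component yields precisely $\mathfrak{d}([x, y]_\g)=\theta(x)\mathfrak{d}(y)-\theta(y)\mathfrak{d}(x)+[\mathfrak{d}(x), \mathfrak{d}(y)]_\h$, so $\mathfrak{d}$ is a crossed homomorphism on Lie algebras.

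For the converse (integration), given a Lie algebra crossed homomorphism $\mathfrak{d}:\g\lon\h$, I would reverse the computation above to see that $\hat{\mathfrak{d}}:\g\lon\h\rtimes_\theta\g$, $\hat{\mathfrak{d}}(x)=(\mathfrak{d}(x), x)$, is a Lie algebra homomorphism. Since $G$ is connected and simply connected, Lie's second theorem provides a unique Lie group homomorphism $\hat D: G\lon H\rtimes_\Theta G$ with $\hat D_{*e_G}=\hat{\mathfrak{d}}$. It then remains to show that $\hat D$ projects to the identity in the $G$-factor, so that it has the form $a\mapsto(D(a), a)$; setting $D:=\pr_H\circ\hat D$ gives the desired crossed homomorphism with $D_{*e_G}=\mathfrak{d}$ by construction. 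This projection step is where the main care is needed: composing $\hat D$ with the projection $\pi: H\rtimes_\Theta G\lon G$ gives a Lie group homomorphism $\pi\circ\hat D: G\lon G$ whose differential at $e_G$ is $\pi_{*}\circ\hat{\mathfrak{d}}=\Id_\g$, and by the uniqueness clause of Lie's second theorem (again using that $G$ is connected and simply connected) one concludes $\pi\circ\hat D=\Id_G$. The only real obstacle worth flagging is exactly this uniqueness argument: the existence of $\hat D$ is automatic, but one must verify that it is genuinely a lift of $\mathrm{id}_G$ rather than some other homomorphism sharing the same differential, and this is guaranteed precisely by the simple connectivity of $G$.
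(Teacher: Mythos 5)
Your argument is correct, but note that the paper itself gives no proof of this statement: it is quoted with attribution to \cite{GLS,JS}, and the surrounding text (both Lie groups connected and simply connected, $\theta$ the differentiated action of $\Theta$) supplies exactly the hypotheses you use. Your route --- identifying a crossed homomorphism $D$ with its graph $\hat D(a)=(D(a),a)$ being a Lie group homomorphism into $H\rtimes_\Theta G$, differentiating to get a Lie algebra homomorphism into $\h\rtimes_\theta\g$, and conversely integrating the graph $x\mapsto(\mathfrak{d}(x),x)$ by Lie's second theorem and then showing the $G$-component is $\Id_G$ --- is essentially the standard argument used in the cited references, so you have in effect reconstructed the omitted proof rather than found a different one. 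The semidirect-product bracket you write down is the correct one for that multiplication, and extracting the $\h$-component does give the Lie algebra crossed homomorphism identity. One small imprecision worth fixing: the uniqueness step, $\pi\circ\hat D=\Id_G$ because both are homomorphisms with differential $\Id_\g$, needs only that $G$ is \emph{connected}; simple connectedness is what guarantees the \emph{existence} of $\hat D$. This does not affect correctness, since both hypotheses are available.
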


At the end of this subsection, we establish the relationship between the cohomology of crossed homomorphisms on Lie groups and the cohomology of crossed homomorphism on Lie algebras. Recall the cohomology of crossed homomorphisms on Lie algebras as following, see \cite{JS, JST, PSTZ}for more details. Let $\frak{d}:\g\lon\h$ be a crossed homomorphism with respect to the action $\theta$ on Lie algebras. Define $\theta_{\frak{d}}:\g\to\gl(\h)$ by
\begin{equation}\label{eqreptheta}
\theta_{\frak{d}}(x)u=\theta(x)u+[\frak{d}(x), u]_\h, \quad \forall x\in\g, u\in\h.
\end{equation}
It was shown in \cite[Proposition 3.1]{Lue} that $\theta_{\frak{d}}$ is a representation of the Lie algebra $\g$ on $\h$. Denote by $C^{n}(\frak{d})=\Hom(\wedge^{n}\g, \h)$.

\begin{defi}{\rm(\cite{PSTZ})}
The cohomology of the cochain complex $(\oplus_{n=0}^{+\infty}C^n(\frak{d}),\dM^{\CE}_{\theta_{\frak{d}}})$ is called the {\bf cohomology of the crossed homomorphism $\frak{d}$}, where $\dM_{\theta_{\frak{d}}}^{\CE}:\Hom(\wedge^{k}\g, \h)\lon\Hom(\wedge^{k+1}\g, \h)$ is the Chevalley-Eilenberg coboundary operator of the Lie algebra $(\g, [\cdot,\cdot]_\g)$ with coefficients in the representation $(\h, \theta_{\frak{d}})$. Denote its $n$-th cohomology group by $\huaH^n(\frak{d})$.
\end{defi}

Let $D:G\lon H$ be a crossed homomorphism with respect to an action $\Theta$ on Lie groups. According to the Proposition \ref{reptd}, the map $\Theta_D: G\lon\GL(\h)$ given by
$$
\Theta_D(a)u=\Ad_{D(a)}\tilde{\Theta}(a)u, \quad\forall a\in G, u\in\h.
$$
is a representation of $G$ on $\h$. Denote by $\frak{d}:\g\lon\h$ the crossed homomorphism with respect to the action $\theta$ on Lie algebras corresponding to $D$.
\begin{pro}\label{thetadandT}
With the above notations, the tangent map of $\Theta_D:G\lon\GL(\h)$ is the linear map $\theta_{\frak{d}}:\g\lon\gl(\h)$.
\end{pro}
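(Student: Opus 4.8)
The plan is to compute the tangent map $(\Theta_D)_{*e_G}:\g\lon\gl(\h)$ directly from the definition \eqref{defireptd} by differentiating along a curve, and then to match the result with \eqref{eqreptheta} term by term. Fix $x\in\g$ and choose a smooth curve $\gamma:\mathbb{R}\lon G$ with $\gamma(0)=e_G$ and $\frac{d}{dt}|_{t=0}\gamma(t)=x$. Since $\Theta_D$ takes values in $\GL(\h)$, its tangent map at $e_G$ lands in $\gl(\h)=\End(\h)$ and is given by the endomorphism
$$
(\Theta_D)_{*e_G}(x)=\frac{d}{dt}|_{t=0}\Theta_D(\gamma(t))=\frac{d}{dt}|_{t=0}\big(\Ad_{D(\gamma(t))}\circ\tilde{\Theta}(\gamma(t))\big).
$$

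Before differentiating I would record two base-point facts. First, setting $a=b=e_G$ in \eqref{deficro} gives $D(e_G)=D(e_G)\cdot_H D(e_G)$, hence $D(e_G)=e_H$ and $\Ad_{D(e_G)}=\Id$. Second, $\Theta(e_G)=\Id_H$, so $\tilde{\Theta}(e_G)=\Id$ by \eqref{defitheta}. Because $\Theta_D(\gamma(t))$ is the composition of two operator-valued curves in $\GL(\h)$, I would apply the Leibniz rule for such a composition; both evaluation-at-$t=0$ factors above equal the identity, so the two mixed terms collapse and I obtain
$$
(\Theta_D)_{*e_G}(x)=\frac{d}{dt}|_{t=0}\Ad_{D(\gamma(t))}+\frac{d}{dt}|_{t=0}\tilde{\Theta}(\gamma(t)).
$$

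It then remains to identify the two summands. The curve $t\mapsto\tilde{\Theta}(\gamma(t))$ in $\Aut(\h)$ differentiates, by the definition of $\theta$ as the Lie algebra action induced by $\Theta$, to $\theta(x)$. For the other term, $t\mapsto D(\gamma(t))$ is a curve in $H$ through $D(e_G)=e_H$ with velocity $\frac{d}{dt}|_{t=0}D(\gamma(t))=D_{*e_G}(x)=\frak{d}(x)$, so composing with $\Ad:H\lon\GL(\h)$ and using the chain rule together with the formula $\Ad_{*e_H}(u)(v)=[u,v]_\h$ from \eqref{defiad} gives $\frac{d}{dt}|_{t=0}\Ad_{D(\gamma(t))}=\Ad_{*e_H}(\frak{d}(x))=[\frak{d}(x),\cdot\,]_\h$. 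Evaluating on an arbitrary $u\in\h$ then yields $(\Theta_D)_{*e_G}(x)u=\theta(x)u+[\frak{d}(x),u]_\h$, which is exactly $\theta_{\frak{d}}(x)u$ by \eqref{eqreptheta}.

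The computation is routine once the structure is set up; the one point requiring genuine care is the differentiation of the operator composition $\Ad_{D(\gamma(t))}\circ\tilde{\Theta}(\gamma(t))$ inside $\GL(\h)$. Unlike the group Leibniz rule \eqref{defileib} for $m$ on $G$, here the two factors are composed as endomorphisms of $\h$, so I must justify that the derivative at $t=0$ splits as the sum of the two one-sided derivatives, and that the reduction to the displayed two-term expression is legitimate precisely because both factors equal $\Id$ at $t=0$. Verifying these base-point identities, $D(e_G)=e_H$ and $\tilde{\Theta}(e_G)=\Id$, is therefore the crux that forces the mixed terms to vanish and makes the two remaining derivatives combine into $\theta_{\frak{d}}$.
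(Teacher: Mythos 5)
Your proof is correct and follows essentially the same route as the paper: differentiate $\Theta_D=\Ad_{D(\cdot)}\circ\tilde{\Theta}(\cdot)$ along a curve through $e_G$ (the paper uses $\exp_G(sx)$, you use an arbitrary curve), split the derivative into two terms via the Leibniz rule, and identify them as $[\frak{d}(x),\cdot\,]_\h$ and $\theta(x)$. Your explicit verification of the base-point facts $D(e_G)=e_H$ and $\tilde{\Theta}(e_G)=\Id$, which justify the splitting, is a welcome addition that the paper leaves implicit.
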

\begin{proof}
Denote by $\exp_G$ and $\exp_H$ are exponential maps of $(G, e_G, \cdot_G)$ and $(H, e_H, \cdot_H)$ respectively. For any $x\in\g$ and $u\in\h$, by \eqref{defireptd} and \eqref{eqreptheta}, we obtain
\begin{eqnarray*}
\frac{d}{ds}|_{s=0}\Theta_D(\exp_G(sx))u&=&\frac{d}{ds}|_{s=0}\Ad_{D(\exp_G(sx))}\tilde{\Theta}(\exp_G(sx))u\\
&=&\frac{d}{ds}|_{s=0}\Ad_{D(\exp_G(sx))}u+\frac{d}{ds}|_{s=0}\tilde{\Theta}(\exp_G(sx))u\\
&=&[\frak{d}(x), u]+\theta(x)u=\theta_{\frak{d}}(x)u.
\end{eqnarray*}
Thus the tangent map of $\Theta_D:G\lon\GL(\h)$ is $\theta_{\frak{d}}:\g\lon\gl(\h)$.
\end{proof}
Let $D:G\lon H$ be a crossed homomorphism with respect to the action $\Theta$ on Lie groups. Denote by $\frak{d}:\g\lon\h$ the crossed homomorphism with respect to the action $\theta$ on Lie algebras corresponding to $D$. Define $\VE:C^k(D)\lon C^{k}(\frak{d})$ by
\begin{eqnarray*}
&&\VE (F)(x_1,\cdots, x_{k})\\
\nonumber&=& \sum_{s\in \Sym(k)}(-1)^{|s|}\frac{d}{dt_{s(1)}}\cdots\frac{d}{dt_{s(k)}}\bigg|_{t_{s(1)}=t_{s(2)}=\cdots=t_{s(k)}=0}F(\exp_G(t_{s(1)}x_{s(1)}),\cdots,\exp_G(t_{s(k)}x_{s(k)})),
\end{eqnarray*}
for all $F\in C^k(D),~x_1,\cdots, x_{k}\in\g$, where $\exp_G$ is the exponential map of $(G, e_G, \cdot_G)$.
\begin{pro}
  With the above notations, $\VE$ is a cochain map, i.e. we have the following commutative diagram
\[
\small{ \xymatrix{
\cdots\longrightarrow C^{k-1}(D) \ar[d]^{\VE} \ar[r]^{\quad\dM^{D}}&
C^{k}(D) \ar[d]^{\VE} \ar[r]^{\quad\dM^{D}} & C^{k+1}(D) \ar[d]^{\VE} \ar[r]  & \cdots  \\
\cdots\longrightarrow C^{k-1}(\frak{d} ) \ar[r]^{\quad\dM^{\CE}_{\theta_{\frak{d}}}} & C^{k}(\frak{d}) \ar[r]^{\quad \dM^{\CE}_{\theta_{\frak{d}}}} &C^{k+1}(\frak{d})\ar[r]& \cdots.}
}
\]
Consequently, $\VE$ induces a homomorphism $\VE_*$ from the cohomology
group $\huaH^{k}(D)$ to $\huaH^{k}(\frak{d})$. The map $\VE$ is called the {\bf Van Est map}.
\end{pro}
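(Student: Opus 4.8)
The plan is to recognize that, by construction, the two rows of the diagram are nothing but standard complexes for corresponding representations. The top complex $\big(C^\ast(D),\dM^D\big)$ is the smooth group cochain complex of $G$ with coefficients in $(\h,\Theta_D)$, and the bottom complex $\big(C^\ast(\frak{d}),\dM_{\theta_{\frak{d}}}^{\CE}\big)$ is the Chevalley--Eilenberg complex of $\g$ with coefficients in $(\h,\theta_{\frak{d}})$; moreover these two representations correspond under differentiation by Proposition \ref{thetadandT}. Thus $\VE$ is precisely the classical van Est map associated to the pair $(\Theta_D,\theta_{\frak{d}})$, and the statement is an instance of the classical van Est theorem. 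To keep the paper self-contained, I would nonetheless give a direct verification that $\VE$ intertwines the two differentials, carried out term by term against the defining formulas.

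Concretely, I would fix $F\in C^k(D)$ and $x_1,\dots,x_{k+1}\in\g$, substitute $a_i=\exp_G(t_i x_i)$ into both $\dM_{\theta_{\frak{d}}}^{\CE}\big(\VE(F)\big)(x_1,\dots,x_{k+1})$ and $\VE\big(\dM^D F\big)(x_1,\dots,x_{k+1})$, and compare the resulting iterated derivatives at $t_1=\cdots=t_{k+1}=0$ after antisymmetrization over $\Sym(k+1)$. The key point is that each of the $k+1$ variables $t_j$ is differentiated exactly once. The representation term $\Theta_D(a_1)\alpha_k(a_2,\dots,a_{k+1})$ of $\dM^D F$ depends on $t_1$ only through $\Theta_D(\exp_G(t_1 x_1))$, so $\partial_{t_1}|_{0}$ yields $\frac{d}{dt_1}|_{t_1=0}\Theta_D(\exp_G(t_1 x_1))=\theta_{\frak{d}}(x_1)$ by Proposition \ref{thetadandT}, while the remaining derivatives act on $\alpha_k$ and rebuild $\VE(F)$. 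Summing over the antisymmetrization reproduces the representation terms $\sum_i(-1)^{i+1}\theta_{\frak{d}}(x_i)\VE(F)(x_1,\dots,\widehat{x_i},\dots,x_{k+1})$ of the Chevalley--Eilenberg differential.

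The heart of the matter is the family of face terms $\alpha_k(a_1,\dots,a_i\cdot_G a_{i+1},\dots,a_{k+1})$. Here the merged slot contains $\exp_G(t_i x_i)\cdot_G\exp_G(t_{i+1}x_{i+1})$ and receives two derivatives, $\partial_{t_i}\partial_{t_{i+1}}|_{0}$, while every other slot receives one. Using the second-order expansion $\exp_G(t_i x_i)\cdot_G\exp_G(t_{i+1}x_{i+1})=\exp_G\!\big(t_i x_i+t_{i+1}x_{i+1}+\tfrac12 t_i t_{i+1}[x_i,x_{i+1}]_\g+\cdots\big)$ together with the Leibniz rule \eqref{defileib}, the mixed derivative splits into a piece that is a second-order derivative of $F$ applied to the symmetric pair $(x_i,x_{i+1})$ and a piece that is a first-order derivative of $F$ applied to $\tfrac12[x_i,x_{i+1}]_\g$. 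The symmetric piece is annihilated by the antisymmetrization over $\Sym(k+1)$, while the bracket piece survives and, after collecting shuffle signs, yields exactly $\sum_{i<j}(-1)^{i+j}\VE(F)([x_i,x_j]_\g,x_1,\dots,\widehat{x_i},\dots,\widehat{x_j},\dots,x_{k+1})$. Finally, the degenerate term $(-1)^{k+1}\alpha_k(a_1,\dots,a_k)$ does not depend on $t_{k+1}$, so $\partial_{t_{k+1}}|_{0}$ kills it; this matches the absence of any corresponding term on the Lie algebra side.

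I expect the main obstacle to be the combinatorial sign bookkeeping in the middle step: one must match the signs arising from the antisymmetrization over $\Sym(k+1)$ with the Koszul signs $(-1)^{i+j}$ of the Chevalley--Eilenberg formula, and confirm that only the antisymmetric (bracket) part of the second-order group expansion contributes. The cleanest way to control this is to record the expansion $\exp_G(t_i x_i)\cdot_G\exp_G(t_{i+1}x_{i+1})$ once and for all, then argue abstractly that since $\VE(F)$ is already fully alternating, its pairing with the symmetric part $t_i x_i+t_{i+1}x_{i+1}$ cancels under the shuffle sum, isolating the bracket term with the correct sign. The representation terms and the vanishing of the degenerate term are comparatively routine once Proposition \ref{thetadandT} is in hand.
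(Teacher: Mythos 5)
Your first paragraph is exactly the paper's proof: by Proposition \ref{thetadandT} the representations $(\h,\Theta_D)$ and $(\h,\theta_{\frak{d}})$ correspond under differentiation, the complexes $(C^*(D),\dM^D)$ and $(C^*(\frak{d}),\dM^{\CE}_{\theta_{\frak{d}}})$ are precisely the standard group and Chevalley--Eilenberg complexes for these representations, and the claim then follows from the classical van Est argument, which is all the paper says. The supplementary term-by-term verification you sketch --- representation terms via $\frac{d}{dt}|_{t=0}\Theta_D(\exp_G(tx))=\theta_{\frak{d}}(x)$, face terms via the second-order expansion of $\exp_G(t_ix_i)\cdot_G\exp_G(t_{i+1}x_{i+1})$ with the symmetric part cancelled by antisymmetrization and the bracket part surviving, and the degenerate term killed because it is independent of $t_{k+1}$ --- is a correct outline of that classical argument, which the paper cites rather than reproduces, so your proposal is the same approach carried out in more detail.
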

\begin{proof}
By Proposition \ref{thetadandT}, the differentiation of the representation $\Theta_D:G\lon \GL(\h)$ of Lie group $(G, e_G, \cdot_G)$ is the representation $\theta_{\frak{d}}:\g\lon\gl(\h)$ of the Lie algebra $(\g, [\cdot, \cdot]_\g)$ . Since the cochains $C^k(D)$ and
$C^k(\frak{d})$ are in fact exactly those of Lie group $(G, e_G, \cdot_G)$ and the Lie algebra $(\g, [\cdot, \cdot]_\g)$ with representations $(\h, \Theta_D)$ and $(\h, \theta_{\frak{d}})$ respectively, the conclusion follows
from the classical argument for the cohomology of Lie groups and Lie
algebras.
\end{proof}

Just as the classical situation, under certain conditions, the cohomology group $\huaH^{k}(D)$ and $\huaH^{k}(\frak{d})$ are isomorphic.

\begin{thm}\label{vanE}
If the Lie group $(G, e_G, \cdot_G)$ is connected and its homotopy
groups are trivial in $1, 2,\cdots, n$, then for $1\leq k\leq n$, the
cohomology group $\huaH^{k}(D)$ of the crossed homomorphism $D$ on Lie groups is isomorphic to the cohomology
group $\huaH^{k}(\frak{d})$ of the crossed homomorphism $\frak{d}$ on Lie algebras.
\end{thm}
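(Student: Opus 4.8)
The plan is to reduce the statement to the classical van Est isomorphism theorem for Lie groups and Lie algebras, which is legitimate because the two complexes appearing here are, by construction, nothing but standard cohomology complexes. Indeed, the space $C^k(D)$ of smooth maps $\underbrace{G\times\cdots\times G}_k\lon\h$ together with the differential $\dM^D$ is precisely the smooth group cochain complex of $(G, e_G, \cdot_G)$ with coefficients in the representation $(\h, \Theta_D)$, and $C^k(\frak{d})=\Hom(\wedge^k\g, \h)$ with $\dM^{\CE}_{\theta_{\frak{d}}}$ is precisely the Chevalley--Eilenberg complex of $\g$ with coefficients in $(\h, \theta_{\frak{d}})$. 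Thus the theorem is not really a new computation but an instance of the general comparison between smooth group cohomology and Lie algebra cohomology, once the coefficient modules have been matched.

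The first substantive step I would carry out is to invoke Proposition \ref{thetadandT}, which identifies the differentiation of the Lie group representation $\Theta_D\colon G\lon\GL(\h)$ with the Lie algebra representation $\theta_{\frak{d}}\colon\g\lon\gl(\h)$. This is exactly the compatibility needed so that the van Est map $\VE\colon C^k(D)\lon C^k(\frak{d})$ defined above, with its alternating sum over $\Sym(k)$ of iterated derivatives along one-parameter subgroups $t\mapsto\exp_G(tx_i)$, coincides with the classical van Est map for the pair $(G,\g)$ with these coefficients. Since the previous proposition already shows $\VE$ is a cochain map and hence descends to $\VE_*\colon\huaH^k(D)\lon\huaH^k(\frak{d})$, what remains is purely to argue that $\VE_*$ is an isomorphism in the asserted range $1\le k\le n$. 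At that point I would simply cite the classical van Est theorem in the form used for Lie group homomorphisms in \cite{NR, CI}: for $G$ connected with $\pi_1(G)=\cdots=\pi_n(G)=0$, the van Est map induces an isomorphism on cohomology in degrees $\le n$.

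The main obstacle — and the only place where the homotopy-group hypothesis does genuine work — is the proof of that classical isomorphism, which I would not reprove but whose mechanism I would keep in view. It proceeds through the van Est double complex interpolating between the bar complex computing smooth group cohomology and the complex of left-invariant forms computing $H^*(\g,-)$; the connectivity assumption $\pi_i(G)=0$ for $i\le n$ guarantees that $G$ is $n$-connected, so the relevant rows (or columns) of this double complex are acyclic up to degree $n$, forcing the edge map $\VE_*$ to be an isomorphism for $1\le k\le n$. The delicate point to state carefully is precisely the degree bookkeeping: it is the vanishing of $\pi_1,\dots,\pi_n$ (rather than mere connectedness or contractibility) that yields the isomorphism exactly in the range $1\le k\le n$ and no further. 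Because Proposition \ref{thetadandT} has already reconciled the coefficient systems, no additional feature special to crossed homomorphisms enters, and the conclusion follows verbatim from the Lie-group/Lie-algebra comparison.
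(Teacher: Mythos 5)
Your proposal is correct and takes essentially the same route as the paper: the paper's own proof likewise reduces everything to the classical van Est comparison, noting (via Proposition \ref{thetadandT}) that $C^*(D)$ and $C^*(\frak{d})$ are exactly the smooth group cochain complex of $(G,\Theta_D)$ and the Chevalley--Eilenberg complex of $(\g,\theta_{\frak{d}})$, so the classical argument applies verbatim. Your additional sketch of the double-complex mechanism behind the classical theorem is fine elaboration but not something the paper spells out either.
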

\begin{proof}
As for the previous proposition, the conclusion also follows from the classical argument for the cohomology of Lie groups and Lie algebras.
\end{proof}
\section{Deformations and rigidity of crossed homomorphisms on Lie groups}
In this section, we study deformations of crossed homomorphisms on Lie groups and obtain several rigidity results of them.
\subsection{Deformations of crossed homomorphisms on Lie groups}
Let $(G, e_G, \cdot_G)$ and $(H, e_H, \cdot_H)$ be Lie groups and $\Theta: G\lon \Aut(H)$ be an action of $(G, e_G, \cdot_G)$ on $(H, e_H, \cdot_H)$. Let $D: G\lon H$ be a crossed homomorphism with respect to the action $\Theta$ on Lie groups.

\begin{defi}
A {\bf deformation} of $D$ is a smooth map $\huaD(\cdot, \cdot): G\times \mathbb{R}\lon H$ such that $\huaD(\cdot, 0)=D$ and $\huaD(\cdot, t):G\lon H$ are crossed homomorphisms with respect to the action $\Theta$ on Lie groups for all $t\in\mathbb{R}$.
\end{defi}
\emptycomment{
\begin{rmk}\label{excl}
For a crossed homomorphism $D: G\lon H$ of Lie groups with respect to the action $\Theta$, we known that $\huaD(a, t)=D(a)$ always is a deformation of $D$. In the sequel, the deformations we mentioned always exclude this deformation.
\end{rmk}
}
\begin{defi}
Let $\huaD$ and $\huaD'$ be two deformations of $D$. If there exists an interval $I=(-\varepsilon, \varepsilon)\subset\mathbb{R}$ and a smooth map $\tau:I\lon H$ such that $\tau(0)=e_H$ and
\begin{equation*}
\huaD(a, t)=\tau(t)\cdot_H\huaD'(a, t)\cdot_H \Theta(a)\tau(t)^{-1},
\end{equation*}
for all $t\in I$, then $\huaD$ and $\huaD'$ are called to be {\bf equivalent} on $I$.
\end{defi}

Let $\huaD(\cdot, \cdot): G\times \mathbb{R}\lon H$ be a deformation of $D$. Denote by $D_s=\huaD(\cdot, s)$ for any $s\in\mathbb{R}$. Then $D_s: G\lon H$ is a crossed homomorphism with respect to the action $\Theta$ on Lie groups. Define a smooth map $\hat{D}_{s}: G\lon\h$ by
\begin{equation}\label{eqcoD}
\hat{D}_{s}(a)=(R_{D_{s}(a)^{-1}})_{*D_{s}(a)}\frac{d}{dt}|_{t=0}D_{t+s}(a), \quad \forall a\in G.
\end{equation}
\begin{pro}\label{cocycle}
With the above notations, $\hat{D}_{s}$ is a $1$-cocycle of the crossed homomorphism $D_s$, i.e. $\hat{D}_{s}\in Z^{1}(D_s)$. Moreover, if $\huaD$ and $\huaD'$ are equivalent deformations of $D$, then $[\hat{D'}_0]=[\hat{D}_0]\in\huaH^{1}(D)$.
\end{pro}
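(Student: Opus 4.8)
The statement has two parts, and for both the engine is the Leibniz rule \eqref{defileib} together with the translation calculus for $\Ad$ recalled in \eqref{defiad}. For the cocycle claim I would first record what must be shown: since $\dM^{D_s}\alpha(a_1,a_2)=\Theta_{D_s}(a_1)\alpha(a_2)+\alpha(a_1)-\alpha(a_1\cdot_G a_2)$ for a $1$-cochain $\alpha$, the assertion $\hat{D}_s\in Z^1(D_s)$ is equivalent to
$$
\hat{D}_s(a_1\cdot_G a_2)=\Theta_{D_s}(a_1)\hat{D}_s(a_2)+\hat{D}_s(a_1),\qquad \forall a_1,a_2\in G.
$$
To produce this, differentiate the defining relation \eqref{deficro} for $D_{t+s}$, namely $D_{t+s}(a_1\cdot_G a_2)=D_{t+s}(a_1)\cdot_H\big(\Theta(a_1)D_{t+s}(a_2)\big)$, with respect to $t$ at $t=0$. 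Writing $b_1=D_s(a_1)$ and $b_2=\Theta(a_1)D_s(a_2)$ and applying the Leibniz rule \eqref{defileib} in $H$ expresses $\frac{d}{dt}\big|_{t=0}D_{t+s}(a_1\cdot_G a_2)$ as a sum of a ``left'' term $(L_{b_1})_{*b_2}(\Theta(a_1))_{*D_s(a_2)}\frac{d}{dt}\big|_{t=0}D_{t+s}(a_2)$ and a ``right'' term $(R_{b_2})_{*b_1}\frac{d}{dt}\big|_{t=0}D_{t+s}(a_1)$.

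The right term is the routine one: applying $(R_{(b_1b_2)^{-1}})_{*b_1b_2}$ to it, the composition $R_{(b_1b_2)^{-1}}\circ R_{b_2}$ collapses to $R_{b_1^{-1}}$, so it yields exactly $\hat{D}_s(a_1)$ by \eqref{eqcoD}. The left term is where the argument is delicate and is, I expect, the main obstacle. I would factor $R_{(b_1b_2)^{-1}}=R_{b_1^{-1}}\circ R_{b_2^{-1}}$, use that left and right translations commute to rewrite $(R_{b_2^{-1}})_{*}(L_{b_1})_{*}=(L_{b_1})_{*}(R_{b_2^{-1}})_{*}$, and then invoke $\Ad_{b_1}=(R_{b_1^{-1}})_{*}(L_{b_1})_{*}$ from \eqref{defiad}. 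This reduces the left term to $\Ad_{b_1}\big((R_{b_2^{-1}})_{*b_2}(\Theta(a_1))_{*D_s(a_2)}\frac{d}{dt}\big|_{t=0}D_{t+s}(a_2)\big)$. The key identity making this collapse is that $\Theta(a_1)$ is a group automorphism, so $b_2^{-1}=\Theta(a_1)(D_s(a_2)^{-1})$ and hence $R_{b_2^{-1}}\circ\Theta(a_1)=\Theta(a_1)\circ R_{D_s(a_2)^{-1}}$ as maps $H\to H$; differentiating this identity turns the inner expression into $(\Theta(a_1))_{*e_H}\hat{D}_s(a_2)=\tilde{\Theta}(a_1)\hat{D}_s(a_2)$ by \eqref{defitheta}. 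Thus the left term equals $\Ad_{D_s(a_1)}\tilde{\Theta}(a_1)\hat{D}_s(a_2)=\Theta_{D_s}(a_1)\hat{D}_s(a_2)$ by \eqref{defireptd}, and adding the two terms gives the cocycle identity.

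For the second part, set $\xi=\frac{d}{dt}\big|_{t=0}\tau(t)\in\h$, which is legitimate since $\tau(0)=e_H$. On $0$-cochains the differential \eqref{deficob} reads $\dM^D\xi(a)=\Theta_D(a)\xi-\xi$, so it suffices to show $\hat{D}_0-\hat{D'}_0=-\dM^D\xi$. I would differentiate the equivalence relation $D_t(a)=\tau(t)\cdot_H D'_t(a)\cdot_H\Theta(a)\tau(t)^{-1}$ at $t=0$ by the three-factor version of \eqref{defileib}, noting that at $t=0$ the outer factors equal $e_H$, that $\frac{d}{dt}\big|_{t=0}\tau(t)^{-1}=-\xi$, and that $(\Theta(a))_{*e_H}=\tilde{\Theta}(a)$. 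This gives
$$
\frac{d}{dt}\Big|_{t=0}D_t(a)=(R_{D(a)})_{*e_H}\xi+\frac{d}{dt}\Big|_{t=0}D'_t(a)-(L_{D(a)})_{*e_H}\tilde{\Theta}(a)\xi.
$$
Right-translating by $D(a)^{-1}$ and simplifying the three contributions---using $(R_{D(a)^{-1}})_{*}(R_{D(a)})_{*}=\Id$, the definition \eqref{eqcoD} of $\hat{D'}_0$, and $\Ad_{D(a)}=(R_{D(a)^{-1}})_{*}(L_{D(a)})_{*}$---yields $\hat{D}_0(a)=\xi+\hat{D'}_0(a)-\Theta_D(a)\xi$. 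Hence $\hat{D}_0-\hat{D'}_0=-(\Theta_D(\cdot)\xi-\xi)=-\dM^D\xi$, so $[\hat{D}_0]=[\hat{D'}_0]\in\huaH^1(D)$, completing the proof.
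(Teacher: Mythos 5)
Your proposal is correct and follows essentially the same route as the paper: both parts differentiate the defining identities at $t=0$ via the Leibniz rule \eqref{defileib}, split into left/right translation terms, and use the automorphism property of $\Theta(a_1)$ together with $\Ad_{b_1}=(R_{b_1^{-1}})_{*}(L_{b_1})_{*}$ to identify the left term as $\Theta_{D_s}(a_1)\hat{D}_s(a_2)$ and the difference $\hat{D'}_0-\hat{D}_0$ as the coboundary $\dM^D w$. The only cosmetic difference is that the paper handles $\frac{d}{dt}|_{t=0}\Theta(a_1)D_{t+s}(a_2)$ by rewriting the curve multiplicatively as $\Theta(a_1)\big(D_{t+s}(a_2)\cdot_H D_s(a_2)^{-1}\big)\cdot_H\Theta(a_1)D_s(a_2)$, which is exactly the identity $R_{b_2^{-1}}\circ\Theta(a_1)=\Theta(a_1)\circ R_{D_s(a_2)^{-1}}$ you differentiate.
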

\begin{proof}
Since $\huaD(\cdot, \cdot): G\times \mathbb{R}\lon H$ is a deformation of $D$, then for each $s\in\mathbb{R}$,
$$
D_{s}(a\cdot_G b)=D_{s}(a)\cdot_H\Big(\Theta(a)D_s(b)\Big), \quad\forall a, b\in G.
$$
By \eqref{defileib}, \eqref{defiad} and \eqref{defitheta}, we have
\begin{eqnarray*}
&&\frac{d}{dt}|_{t=0}D_{t+s}(a\cdot_G b)\\
&=&\frac{d}{dt}|_{t=0}D_{t+s}(a)\cdot_H\Big(\Theta(a)D_{t+s}(b)\Big)\\
&=&({R_{\Theta(a)D_{s}(b)}})_{*D_s(a)}\frac{d}{dt}|_{t=0}D_{t+s}(a)+({L_{D_{s}(a)}})_{*\Theta(a)D_s(b)}\frac{d}{dt}|_{t=0}\Theta(a)D_{t+s}(b)\\
&=&({R_{\Theta(a)D_{s}(b)}})_{*D_s(a)}({R_{D_{s}(a)}})_{*e_H}\hat{D}_{s}(a)+({L_{D_{s}(a)}})_{*\Theta(a)D_s(b)}\frac{d}{dt}|_{t=0}\Big(\Theta(a)(D_{t+s}(b)\cdot_HD_{s}(b)^{-1})\cdot_H\Theta(a)D_{s}(b)\Big)\\
&=&({R_{\Theta(a)D_{s}(b)}})_{*D_s(a)}({R_{D_{s}(a)}})_{*e_H}\hat{D}_{s}(a)+({L_{D_{s}(a)}})_{*\Theta(a)D_s(b)}({R_{\Theta(a)D_s(b)}})_{*e_H}\frac{d}{dt}|_{t=0}\Big(\Theta(a)(D_{t+s}(b)\cdot_HD_{s}(b)^{-1})\Big)\\
&=&({R_{\Theta(a)D_{s}(b)}})_{*D_s(a)}({R_{D_{s}(a)}})_{*e_H}\hat{D}_{s}(a)+({L_{D_{s}(a)}})_{*\Theta(a)D_s(b)}({R_{\Theta(a)D_s(b)}})_{*e_H}\tilde{\Theta}(a)\hat{D}_s(b)\\
&=&({R_{D_s(a)\cdot_H(\Theta(a)D_{s}(b))}})_{*e_H}\hat{D}_{s}(a)+({R_{\Theta(a)D_s(b)}}_{*})_{*D_s(a)}({L_{D_{s}(a)}})_{*e_H}\tilde{\Theta}(a)\hat{D}_s(b)\\
&=&({R_{D_s(a)\cdot_H(\Theta(a)D_{s}(b))}})_{*e_H}\hat{D}_{s}(a)+({R_{D_s(a)\cdot_H(\Theta(a)D_{s}(b))}})_{*e_H}\Ad_{D_{s}(a)}\tilde{\Theta}(a)\hat{D}_s(b)\\
&=&({R_{D_s(a\cdot_G b)}})_{*e_H}\hat{D}_{s}(a)+({R_{D_s(a\cdot_G b)}})_{*e_H}\Ad_{D_{s}(a)}\tilde{\Theta}(a)\hat{D}_s(b),
\end{eqnarray*}
thus
$$
\hat{D}_s(a\cdot_G b)=\hat{D}_s(a)+\Ad_{D_s(a)}\tilde{\Theta}(a)\hat{D}_s(b),
$$
which means that $\hat{D}_s$ is a $1$-cocycle of crossed homomorphism $D_s$, i.e. $\hat{D}_s\in Z^{1}(D_s)$.

Since two deformations of $D$ are equivalent, there exists a small enough real number $\varepsilon>0$ and a smooth map $\tau: (-\varepsilon, \varepsilon)\lon H$ such that
$$\tau(0)=e_H,\quad\huaD(a, t)=\tau(t)\cdot_H\huaD'(a, t)\cdot_H\Theta(a)\tau(t)^{-1}, \quad\forall t\in(-\varepsilon, \varepsilon), ~~a\in G.$$
Denote by $\frac{d}{dt}|_{t=0}\tau(t)=w\in\h$. By \eqref{defileib}, \eqref{defiad} and \eqref{defitheta}, we have
\begin{eqnarray*}
&&\hat{D}'_{0}(a)-\hat{D}_0(a)\\
&=&(R_{D(a)^{-1}})_{*D(a)}\frac{d}{d}|_{t=0}D'_{t}(a)-(R_{D(a)^{-1}})_{*D(a)}\frac{d}{dt}|_{t=0}D_t(a)\\
&=&(R_{D(a)^{-1}})_{*D(a)}\frac{d}{d}|_{t=0}D'_{t}(a)-(R_{D(a)^{-1}})_{*D(a)}\frac{d}{dt}|_{t=0}\tau(t)\cdot_HD'_{t}(a)\cdot_H\Theta(a)\tau(t)^{-1}\\
&=&(R_{D(a)^{-1}})_{*D(a)}\frac{d}{d}|_{t=0}D'_{t}(a)-(R_{D(a)^{-1}})_{*D(a)}\frac{d}{dt}|_{t=0}D'_{t}(a)\cdot_H\Theta(a)\tau(t)^{-1}-(R_{D(a)^{-1}})_{*D(a)}(R_{D(a)})_{*e_H}w\\
&=&(R_{D(a)^{-1}})_{*D(a)}\frac{d}{d}|_{t=0}D'_{t}(a)+(R_{D(a)^{-1}})_{*D(a)}(L_{D(a)})_{*e_H}\tilde{\Theta}(a)w-(R_{D(a)^{-1}})_{*D(a)}\frac{d}{dt}|_{t=0}D'_{t}(a)-w\\
&=&\Ad_{D(a)}\tilde{\Theta}(a)w-w=\dM^{D}w(a).
\end{eqnarray*}
Therefore $[\hat{D'}_0]=[\hat{D}_0]\in\huaH^{1}(D)$.
\end{proof}

\subsection{Rigidity of crossed homomorphisms on Lie groups}\label{secri}
In this subsection, $(G, e_G, \cdot_G)$ and $(H, e_H, \cdot_H)$ are Lie groups and $\Theta: G\lon \Aut(H)$ is an action of $G$ on $H$. Denote by $\h$ the Lie algebra of $(H, e_H, \cdot_H)$. Let $D: G\lon H$ be a crossed homomorphism with respect to the action $\Theta$ on Lie groups, and let $\huaD: G\times\mathbb{R}\lon H$ be a deformation of $D$. Denote by $D_s=\huaD(\cdot, s)$. According to the Proposition \ref{cocycle}, $\hat{D}_s$ given by
$$
\hat{D}_s={R_{D_{s}(a)^{-1}}}_{*D_s(a)}\frac{d}{dt}|_{t=0}D_{t+s}(a), \quad \forall s\in\mathbb{R}, ~~a\in G,
$$
is a $1$-cocycle, i.e. $\hat{D}_s\in Z^{1}(D_s)$.
\begin{defi}
A deformation $\huaD: G\times\mathbb{R}\lon H$ of $D$ is {\bf trivial} on $I=(-\varepsilon, \varepsilon)\subset\mathbb{R}$ if there exists a smooth map $\tau:I\lon H$ such that $$
\tau(0)=e_H, \quad D_t(a)=\tau(t)\cdot_HD(a)\cdot_H\Theta(a)\tau(t)^{-1}, \quad \forall a\in G,~t\in I,
$$
i.e. $D_t: G\lon H$ is equivalent to $D$ on $I$. The crossed homomorphism $D$ is said to be {\bf rigid} if all deformations of $D$ are trivial.

If for each $a\in G$, there exists an interval $I_a=(-\varepsilon_a, \varepsilon_a)$ such that
$$
D_t(a)=\tau(t)\cdot_HD(a)\cdot_H\Theta(a)\tau(t)^{-1}, \quad \forall a\in G,~t\in I_a,
$$
The deformation $\huaD: G\times\mathbb{R}\lon H$ is called to be {\bf pointwise trivial}. The crossed homomorphism $D$ is said to be {\bf weakly rigid} if all deformations of $D$ are pointwise trivial.
\end{defi}
\emptycomment{
According to the Example \ref{Ex1}, the inverse map $(\cdot)^{-1}: G\lon G$ is a crossed homomorphism of $G$ with respect to the conjugation action. Suppose that there exist a deformation $\huaD: G\times\mathbb{R}\lon G$ of $(\cdot)^{-1}$ is pointwise trivial, then for any $a\in G$, there exist an interval $I_a=(-\varepsilon_a, \varepsilon_a)$ such that
$$
D_t(a)=\tau(t)\cdot_G a^{-1}\cdot_G a\cdot_G\tau(t)^{-1}\cdot_G a^{-1}=a^{-1}, \quad \forall a\in G,~t\in I_a.
$$
According to the Remark \ref{excl}, we have the following proposition.
\begin{pro}
Any deformation of $(\cdot)^{-1}$ is non pointwise trivial.
\end{pro}
}
\begin{defi}
Let $\huaD:G\times\mathbb{R}\lon H$ be a deformation of $D$ and $c_{s}\in Z^{1}(D_s)$ be cocycles. We say that the cohomology classes $[c_{s}]\in\huaH^{1}(D_s)$ {\bf vanish smoothly} on $\mathbb{R}$ if there exists a smooth map $\kappa:\mathbb{R}\lon\h$ such that $\dM^{D_s}(\kappa(s))=c_s$ for all $s\in \mathbb{R}$.
\end{defi}

\begin{thm}\label{trivial}
Let $\huaD:G\times\mathbb{R}\lon H$ be a deformation of $D$. If $\huaD$ is trivial on $\mathbb{R}$, then the cohomology classes $[\hat{D}_s]\in\huaH^{1}(D_s)$ vanish smoothly on $\mathbb{R}$.

Conversely, if the cohomology classes $[\hat{D}_s]$ vanish smoothly on $\mathbb{R}$, then $\huaD$ is a pointwise trivial deformation of $D$.
\end{thm}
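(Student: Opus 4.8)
The plan is to run a Moser-type argument organized around a single master identity, the same computation already performed at $s=0$ inside Proposition \ref{cocycle}, but now carried along the whole deformation. For a curve $c$ in $H$ write $\delta c=(R_{c(t)^{-1}})_{*c(t)}\frac{d}{dt}c(t)\in\h$ for its right logarithmic derivative, so that $\delta(c_1c_2)=\delta c_1+\Ad_{c_1}\delta c_2$ and $\delta(c^{-1})=-\Ad_{c^{-1}}\delta c$. Given a curve $\tau:\mathbb{R}\lon H$ with $\tau(0)=e_H$, set $\eta(t)=(R_{\tau(t)^{-1}})_{*\tau(t)}\frac{d}{dt}\tau(t)$ and form the gauge-transformed family $G_t(a)=\tau(t)^{-1}\cdot_H D_t(a)\cdot_H\Theta(a)\tau(t)$. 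First I would compute the right logarithmic derivative of $t\mapsto G_t(a)$ by applying the three-factor product rule, using that $\delta(D_\cdot(a))(t)=\hat D_t(a)$ by \eqref{eqcoD}, that $\delta(\tau^{-1})=-\Ad_{\tau^{-1}}\eta$, and that $\Theta(a)$ is a homomorphism so that $\delta(\Theta(a)\tau)=\tilde{\Theta}(a)\eta$. With $\Theta_{D_t}(a)=\Ad_{D_t(a)}\tilde{\Theta}(a)$ from \eqref{defireptd} and the $0$-coboundary formula $(\dM^{D_t}\eta)(a)=\Theta_{D_t}(a)\eta-\eta$ from \eqref{deficob}, I expect to arrive at
\[
(R_{G_t(a)^{-1}})_{*G_t(a)}\tfrac{d}{dt}G_t(a)=\Ad_{\tau(t)^{-1}}\Big(\hat D_t(a)+(\dM^{D_t}\eta(t))(a)\Big).
\]

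\textbf{Forward direction.} If $\huaD$ is trivial on $\mathbb{R}$ via $\tau$, then by definition $D_t(a)=\tau(t)\cdot_H D(a)\cdot_H\Theta(a)\tau(t)^{-1}$, so $G_t(a)=D(a)$ is constant in $t$ and the left-hand side of the master identity vanishes. Hence $\hat D_t(a)+(\dM^{D_t}\eta(t))(a)=0$ for all $a$, that is $\hat D_t=\dM^{D_t}(-\eta(t))$. Setting $\kappa(s)=-\eta(s)$, which is smooth because $\tau$ is, gives $\dM^{D_s}\kappa(s)=\hat D_s$ for every $s\in\mathbb{R}$, so the classes $[\hat D_s]$ vanish smoothly.

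\textbf{Converse direction.} Now I would read the master identity backwards as a prescription for $\tau$: I want the bracket to vanish, so I impose $\eta(t)=-\kappa(t)$, i.e. I solve the time-dependent equation $\frac{d}{dt}\tau(t)=-(R_{\tau(t)})_{*e_H}\kappa(t)$ with $\tau(0)=e_H$ on $H$. This is a smoothly time-dependent right-invariant vector field, so local existence produces a solution $\tau$ on an interval about $0$. With this choice the bracket equals $\hat D_t(a)-(\dM^{D_t}\kappa(t))(a)$, which is $0$ for every $a$ by hypothesis; hence $t\mapsto G_t(a)$ is constant and equals $G_0(a)=D(a)$. Unwinding $G_t$ yields $D_t(a)=\tau(t)\cdot_H D(a)\cdot_H\Theta(a)\tau(t)^{-1}$ on the interval where $\tau$ is defined, which is exactly the pointwise triviality of $\huaD$.

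The main obstacle is this final integration step. The equation for $\tau$ is a time-dependent right-invariant ODE on $H$, and the delicate issue is the size of its interval of existence: only local solvability is guaranteed in general, and this is precisely what confines the conclusion to pointwise triviality rather than triviality on a fixed global interval. A secondary, purely technical point — to be handled carefully rather than creatively — is the bookkeeping of the three-factor product rule for $\delta$, where the homomorphism property of $\Theta(a)$ is what makes the factor $\Theta(a)\tau$ contribute the clean term $\tilde{\Theta}(a)\eta$ and lets the whole expression collapse onto $\dM^{D_t}$.
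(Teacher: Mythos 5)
Your proof is correct, and your forward direction coincides with the paper's: the paper's primitive $\kappa(s)=-\frac{d}{dt}|_{t=0}\tau(t+s)\cdot_H\tau(s)^{-1}$ is exactly your $-\eta(s)$, and the computation is the same. The converse is where you genuinely diverge. Both you and the paper construct $\tau$ by integrating the same time-dependent right-invariant vector field $h\mapsto -{R_{h}}_{*e_H}\kappa(s)$, but the paper then introduces the auxiliary deformation $\huaF(a,s)=\tau(s)\cdot_H D(a)\cdot_H\Theta(a)\tau(s)^{-1}$, verifies it is again a deformation, and for each fixed $a$ exhibits $D_s(a)$ and $F_s(a)$ as integral curves of the same time-dependent vector field $X_s(h)={L_h}_{*e_H}\tilde{\Theta}(a)\alpha(s)-{R_h}_{*e_H}\alpha(s)$ through $D(a)$, concluding by uniqueness of integral curves on an interval $I_a$ that is allowed to depend on $a$ — hence the pointwise formulation of the theorem. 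You instead feed $\eta=-\kappa$ into your master identity for the gauge transform $G_t(a)=\tau(t)^{-1}\cdot_H D_t(a)\cdot_H\Theta(a)\tau(t)$ and observe that its right logarithmic derivative vanishes identically, so $G_t(a)\equiv D(a)$ by the elementary fact that a curve with zero derivative on a connected interval is constant. This buys two things: you bypass the auxiliary deformation and the integral-curve uniqueness argument entirely, and your identity $D_t(a)=\tau(t)\cdot_H D(a)\cdot_H\Theta(a)\tau(t)^{-1}$ holds for all $t$ in the fixed existence interval $I$ of $\tau$, uniformly in $a$ — that is, you actually prove $\huaD$ is trivial on $I$ in the paper's sense, which is strictly stronger than the pointwise triviality asserted in the theorem and of course implies it. One small correction to your closing remark: local solvability of the ODE for $\tau$ does not "confine the conclusion to pointwise triviality"; it confines it to a fixed small interval, on which your conclusion is uniform in $a$. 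The pointwise character of the paper's statement comes from its per-point application of integral-curve uniqueness, not from the ODE for $\tau$.
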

\begin{proof}
Assume that the deformation $\huaD:G\times\mathbb{R}\lon H$ of $D$ is trivial on $\mathbb{R}$, then there exists a smooth map $\tau:\mathbb{R}\lon H$ such that
\begin{equation*}
D_s(a)=\tau(s)\cdot_{H}D(a)\cdot_H\Big(\Theta(a)\tau(s)^{-1}\Big), \quad\forall s\in \mathbb{R}, ~~a\in G.
\end{equation*}
Define a smooth map $\kappa: \mathbb{R}\lon\h$ by
$$
\kappa(s)=-\frac{d}{dt}|_{t=0}\tau(t+s)\cdot_H\tau(s)^{-1}, \quad\forall s\in I.
$$
For any $a\in G$ and $s\in\mathbb{R}$, we have
\begin{eqnarray*}
&&\frac{d}{dt}|_{t=0}D_{s+t}(a)\\
&=&\frac{d}{dt}|_{t=0}\Big(\tau(s+t)\cdot_{H}D(a)\cdot_H\Theta(a)\tau(s+t)^{-1}\Big)\\
&=&\frac{d}{dt}|_{t=0}\Big((\tau(s+t)\cdot_H\tau(s)^{-1})\cdot_{H}(\tau(s)\cdot_HD(a)\cdot_H\Theta(a)\tau(s)^{-1})\cdot_H\Theta(a)(\tau(s+t)\cdot_H\tau(s)^{-1})^{-1}\Big)\\
&=&\frac{d}{dt}|_{t=0}\Big((\tau(s+t)\cdot_H\tau(s)^{-1})\cdot_{H}D_s(a)\cdot_H\Theta(a)(\tau(s+t)\cdot_H\tau(s)^{-1})^{-1}\Big)\\
&=&{L_{D_{s}(a)}}_{*e_H}\tilde{\Theta}(a)\kappa(s)-{R_{D_{s}(a)}}_{*e_H}\kappa(s),
\end{eqnarray*}
which implies that
\begin{eqnarray*}
\hat{D}_{s}(a)&=&{R_{D_s(a)^{-1}}}_{*D_s(a)}\frac{d}{dt}|_{t=0}D_{s+t}(a)={R_{D_s(a)^{-1}}}_{*D_s(a)}\Big({L_{D_{s}(a)}}_{*e_H}\tilde{\Theta}(a)\kappa(s)-{R_{D_{s}(a)}}_{*e_H}\kappa(s)\Big)\\
&=&\Ad_{D_s(a)}\tilde{\Theta}(a)\kappa(s)-\kappa(s).
\end{eqnarray*}
Thus $\hat{D}_s=\dM^{D_s}\kappa(s)$, which means that the cohomology classes $[\hat{D}_s]\in\huaH^{1}(D_s)$ vanish smoothly on $\mathbb{R}$.

Conversely, assume that the cohomology classes $[\hat{D}_s]$ associated with $\huaD$ vanish smoothly on $\mathbb{R}$, then there exists a smooth map $\alpha: \mathbb{R}\lon\h$ such that
$$
\hat{D}_{s}=\dM^{D_s}\alpha(s) \quad\forall s\in\mathbb{R}.
$$
Define a time-dependent vector field on $H$ by
$$
\alpha^{R}_{s}(h)=-{R_{h}}_{*e_H}\alpha(s), \quad\forall s\in\mathbb{R}, ~h\in H.
$$
Then there exists $\varepsilon>0$ such that the flow $\psi_t(e_H)$ is defined for all $t\in I=(-\varepsilon, \varepsilon)$. Define a smooth map $\tau: I\lon H$ by
$$
\tau(s)=\psi_{s}(e_H), \quad \forall s\in I.
$$
Then
\begin{equation*}
\frac{d}{ds}\tau(s)=\frac{d}{dt}|_{t=0}\tau(s+t)=\frac{d}{dt}|_{t=0}\psi_{s+t}(e_H)=\alpha^{R}_{s}(\psi_s(e_H))=-{R_{\tau(s)}}_{*e_H}\alpha(s).
\end{equation*}
Define a smooth map $\huaF:G\times I\lon H$ by
\begin{equation*}
\huaF(a, s)=\tau(s)\cdot_HD(a)\cdot_H\Theta(a)\tau(s)^{-1},\quad\forall s\in I,~a\in G.
\end{equation*}
Denote by $F_s=\huaF(\cdot, s)$ and $\hat{F}_{s}(a)={R_{F_s(a)^{-1}}}_{*e_H}\frac{d}{dt}|_{t=0}F_{t+s}(a)$ for all $a\in G$. Then we have that
\begin{eqnarray*}
F_s(a\cdot_G b)&=&\tau(s)\cdot_HD(a\cdot_G b)\cdot_H\Theta(a\cdot_G b)\tau(s)^{-1}\\
&=&\tau(s)\cdot_HD(a)\cdot_H\Theta(a)\Big(D(b)\cdot_H\Theta(b)\tau(s)^{-1}\Big)\\
&=&\tau(s)\cdot_HD(a)\cdot_H\Theta(a)\tau(s)^{-1}\cdot_H\Theta(a)\Big(\tau(s)\cdot_HD(b)\cdot_H\Theta(b)\tau(s)^{-1}\Big)\\
&=&F_s(a)\cdot_H\Theta(a)F_s(b),
\end{eqnarray*}
which means that $F_s: G\lon H$ is a crossed homomorphism with respect to the action $\Theta$ on Lie groups. Moreover, $\huaF(a, 0)=D(a)$, it follows that $\huaF: G\times I\lon G$ is a deformation of $D$. According to Proposition \ref{cocycle}, we obtain $[\hat{F}_s]\in\huaH^{1}(F_s)$ for all $s\in I$.

For a fix $a\in G$, define a time-dependent field on $H$ by
\begin{equation*}
X_{s}(h)={L_{h}}_{*e_H}\tilde{\Theta}(a)\alpha(s)-{R_{h}}_{*e_H}\alpha(s), \quad \forall s\in I, ~h\in H.
\end{equation*}
Then we have
\begin{eqnarray*}
X_{s}(D_s(a))&=&{R_{D_s(a)}}_{*e_H}\Big(\Ad_{D_s(a)}\tilde{\Theta}(a)\alpha(s)-\alpha(s)\Big)\\
&=&{R_{D_s(a)}}_{*e_H}\Big(\dM^{D_s}\alpha(s)(a)\Big)={R_{D_s(a)}}_{*e_H}\hat{D}_{s}(a)=\frac{d}{dt}|_{t=0}D_{t+s}(a)=\frac{d}{dt}|_{t=s}D_{t}(a),
\end{eqnarray*}
and
\begin{eqnarray*}
&&\frac{d}{dt}|_{t=s}F_{t}(a)\\
&=&\frac{d}{dt}|_{t=0}F_{t+s}(a)\\
&=&\frac{d}{dt}|_{t=0}\Big(\tau(s+t)\cdot_H D(a)\cdot_H\Theta(a)\tau(s+t)^{-1}\Big)\\
&=&{R_{(D(a)\cdot_H\Theta(a)\tau(s))}}_{*\tau(s)}\frac{d}{dt}|_{t=0}\tau(s+t)+{L_{(\tau(s)\cdot_HD(a)\cdot_H\Theta(a)\tau(s)^{-1})}}_{*e_H}\frac{d}{dt}|_{t=0}\Big(\Theta(a)(\tau(s+t)\cdot_H\tau(s)^{-1})\Big)^{-1}\\
&=&-{R_{(D(a)\cdot_H\Theta(a)\tau(s))}}_{*\tau(s)}{R_{\tau(s)}}_{*e_H}\alpha(s)+{L_{(\tau(s)\cdot_HD(a)\cdot_H\Theta(a)\tau(s)^{-1})}}_{*e_H}\tilde{\Theta}(a)\alpha(s)\\
&=&{L_{F_s(a)}}_{*e_H}\tilde{\Theta}(a)\alpha(s)-{R_{F_s(a)}}_{*e_H}\alpha(s)\\
&=&X_{s}(F_{s}(a)).
\end{eqnarray*}
We have that $D_s(a)$ and $F_s(a)$ are integral curves of the time-dependent vector field $X_{s}$ both passing through $D(a)$ at time $s=0$. Thus there exists $I'=(-\varepsilon', \varepsilon')\subset I$ such that
$$
\tau(s)\cdot_HD(a)\cdot_H\Theta(a)\tau(s)^{-1}=D_s(a), \quad\forall s\in I'.
$$
Therefore, for any $a\in G$, we can find an interval(which depends on $a$) $I_a\subset I$ such that $$D_s(a)=\tau(s)\cdot_H D(a)\cdot_H\Theta(a)\tau(s)^{-1} \quad \forall s\in I_a,$$
which means that $\huaD$ is a pointwise trivial deformation of $D$.
\end{proof}
A close relationship exists between Rota-Baxter operators on Lie groups and crossed homomorphisms on Lie groups. However, the relationship between deformations of Rota-Baxter operators on Lie groups and their cohomology theory defined in \cite{JSZ} remains unexplored. This connection merits further consideration.

\begin{pro}\label{ctri}
Let $G$ be a compact Lie group and $D: G\lon H$ be a crossed homomorphism with respect to an action $\Theta$ on Lie groups. Then any deformation $\huaD: G\times\mathbb{R}\lon H$ of $D$ is trivial, hence all crossed homomorphisms with respect to an action $\Theta$ on $G$ are rigid.
\end{pro}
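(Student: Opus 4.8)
The plan is to show that when $G$ is compact, the cohomology classes $[\hat{D}_s]$ associated with any deformation $\huaD$ vanish smoothly on $\mathbb{R}$, and then to strengthen the pointwise triviality supplied by Theorem \ref{trivial} into genuine triviality on a full interval $I=(-\varepsilon,\varepsilon)$ by exploiting compactness. The starting observation is that for each fixed $s$, the cocycle $\hat{D}_s\in Z^1(D_s)$ is a smooth $1$-cocycle of the group $G$ with coefficients in the representation $(\h,\Theta_{D_s})$. For a compact group, the first smooth cohomology $\huaH^1(G,V)$ with coefficients in any representation vanishes, because one can average: given a $1$-cocycle $c$ one integrates against the (normalized, bi-invariant) Haar measure to produce a coboundary. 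Concretely, I would set
\[
\kappa(s)=-\int_G \Theta_{D_s}(b)^{-1}\,\hat{D}_s(b)\,\mathrm{d}b,
\]
and verify using the cocycle identity $\hat{D}_s(a\cdot_G b)=\hat{D}_s(a)+\Theta_{D_s}(a)\hat{D}_s(b)$ together with the right-invariance of Haar measure that $\dM^{D_s}\kappa(s)=\hat{D}_s$. Smoothness of $s\mapsto\kappa(s)$ follows from smoothness of the integrand in $s$ and differentiation under the integral sign, which is justified because $G$ is compact and the integrand is jointly smooth.

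Having produced such a smooth $\kappa$, the converse direction of Theorem \ref{trivial} immediately gives that $\huaD$ is pointwise trivial: for each $a\in G$ there is an interval $I_a$ on which $D_t(a)=\tau(t)\cdot_H D(a)\cdot_H\Theta(a)\tau(t)^{-1}$, where $\tau$ is the flow of the time-dependent right-invariant vector field $\alpha^R_s(h)=-{R_h}_{*e_H}\kappa(s)$ built in that proof. The remaining and genuinely new step is to upgrade pointwise triviality to triviality, i.e.\ to find a single interval $I$ working simultaneously for all $a\in G$. The point is that the flow $\tau(s)=\psi_s(e_H)$ and the auxiliary deformation $\huaF(a,s)=\tau(s)\cdot_H D(a)\cdot_H\Theta(a)\tau(s)^{-1}$ do not depend on $a$; only the size $\varepsilon_a$ of the interval on which the two integral curves $D_s(a)$ and $F_s(a)$ of the time-dependent field $X_s$ coincide was allowed to depend on $a$ in the proof of Theorem \ref{trivial}.

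The key to uniformity is standard ODE theory combined with compactness: the time-dependent vector field $X_s$ depends smoothly on both $s$ and the base point, and uniqueness of integral curves holds on a neighborhood whose size can be controlled locally uniformly. Since $G$ is compact, I would cover $G$ by finitely many such neighborhoods—equivalently, invoke the smooth dependence of flows on parameters over the compact parameter space $G$—to conclude that the escape time $\varepsilon_a$ is bounded below by a positive constant $\varepsilon=\inf_{a\in G}\varepsilon_a>0$. On $I=(-\varepsilon,\varepsilon)$ one then has $D_s(a)=\tau(s)\cdot_H D(a)\cdot_H\Theta(a)\tau(s)^{-1}$ for every $a\in G$ simultaneously, which is precisely the statement that $\huaD$ is trivial on $I$. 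As this holds for an arbitrary deformation $\huaD$, every crossed homomorphism $D\colon G\lon H$ with respect to $\Theta$ is rigid. The main obstacle I anticipate is this last uniformity argument: one must verify that compactness of $G$ really does force a uniform lower bound on the lifespan over which the two families of integral curves agree, which rests on the joint smoothness of $X_s$ in the parameter $a$ and the continuity of the flow's domain of definition.
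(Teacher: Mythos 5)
Your overall strategy is the same as the paper's: average the cocycle $\hat{D}_s$ against the normalized Haar measure to produce a smooth primitive $\kappa(s)$, conclude that the classes $[\hat{D}_s]$ vanish smoothly, and invoke Theorem \ref{trivial}. However, your explicit formula for the primitive is off by a sign, and the verification you defer would fail. Indeed, taking $b=a^{-1}$ in the cocycle identity and using $\hat{D}_s(e_G)=0$ gives $\Theta_{D_s}(b)^{-1}\hat{D}_s(b)=-\hat{D}_s(b^{-1})$; since Haar measure on a compact group is inversion-invariant, your $\kappa(s)=-\int_G\Theta_{D_s}(b)^{-1}\hat{D}_s(b)\,db$ equals $+\int_G\hat{D}_s(b)\,db$. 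The primitive that actually satisfies $\dM^{D_s}\kappa(s)=\hat{D}_s$ is the paper's choice $\kappa(s)=-\int_G\hat{D}_s(b)\,db$: by the cocycle identity and left invariance of Haar measure, $\Theta_{D_s}(a)\kappa(s)-\kappa(s)=-\int_G\bigl(\hat{D}_s(a\cdot_G b)-\hat{D}_s(a)\bigr)\,db+\int_G\hat{D}_s(b)\,db=\hat{D}_s(a)$. So your $\kappa$ yields $\dM^{D_s}\kappa(s)=-\hat{D}_s$; the fix is simply to drop the minus sign (or, equivalently, the factor $\Theta_{D_s}(b)^{-1}$), after which the averaging step is correct, and your remark about differentiating under the integral sign to get smoothness in $s$ is fine.

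The second half of your proposal, upgrading the pointwise triviality furnished by Theorem \ref{trivial} to triviality on a single interval, addresses a point the paper passes over silently (its proof just cites Theorem \ref{trivial}, whose converse is only stated for pointwise triviality), and it is to your credit that you flagged it. Your compactness/finite-cover argument does work, but it is heavier than necessary, and compactness is actually irrelevant to this step. In the proof of Theorem \ref{trivial}, the flow $\tau$ and hence the auxiliary deformation $F_s(a)=\tau(s)\cdot_H D(a)\cdot_H\Theta(a)\tau(s)^{-1}$ are defined on one interval $I$ that does not depend on $a$; both $D_s(a)$ and $F_s(a)$ are integral curves of the smooth time-dependent field $X_s$ through $D(a)$ at $s=0$, so by the standard open-closed (global uniqueness) argument for ODEs they agree on all of $I$, not merely on some smaller $I_a$. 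Thus the agreement interval never truly depended on $a$, and triviality on $I$ follows for an arbitrary Lie group $G$ once the classes vanish smoothly; compactness of $G$ is needed only for the Haar averaging.
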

\begin{proof}
Consider the Haar measure on $G$, i.e. a measure such that
$$
\int_Gdb=1,\quad  \int_Gf(ab)db=\int_Gf(b)db, \quad \text{for all} ~f\in C^{\infty}(G) ~\text{and for any fix} ~a\in G.
$$
Let $\huaD: \mathbb{R}\times G\lon H$ be a deformation of $D$. By Proposition \ref{cocycle}, $\hat{D}_s\in Z^{1}(D_s)$, that is
$$
\dM^{D_s}\hat{D}_s(a, b)=\Theta_{D_s}(a)\hat{D}_s(b)+\hat{D}_s(a)-\hat{D}_s(a\cdot_G b)=0.
$$
Denote by $\kappa(s)=-\int_G\hat{D}_s(b)db\in\h$. By integration one has
\begin{eqnarray*}
\hat{D}_s(a)&=&\int_G\hat{D}_s(a)db\\
&=&-\int_G\Theta_{D_s}(a)\hat{D}_s(b)db+\int_G\hat{D}_s(a\cdot_G b)db\\
&=&-\Theta_{D_s}(a)\int_G\hat{D}_s(b)db+\int_G\hat{D}_s(b)db\\
&=&\Theta_{D_s}\kappa(s)-\kappa(s)\\
&=&\dM^{D_s}(\kappa(s))(a).
\end{eqnarray*}
Thus $[\hat{D}_s]\in\huaH^{1}(D_s)$ vanish smoothly on $\mathbb{R}$. By Theorem \ref{trivial}, any deformation $\huaD: G\times\mathbb{R}\lon H$ of $D$ is trivial.
\end{proof}

\section{Deformations of crossed homomorphisms on the two-dimensional connected and simply connected Lie group}
Consider the $2$-dimensional real Lie algebra
$$\Big(\g=\text{span}\{e_1=\left(
\begin{array}{cc}
1 & 0 \\
0 & -1 \\
\end{array}
\right), e_2=\left(
\begin{array}{cc}
0 & 1 \\
0 & 0 \\
\end{array}
\right)
\}, ~~~~[\cdot, \cdot]\Big),$$ where $$[e_1, e_2]=e_1e_2-e_2e_1=2e_2.$$
Then $G=\{\left(                                                                                                                             \begin{array}{cc}                                                                                                                                 a & b \\                                                                                                                                 0 & \frac{1}{a} \\                                                                                                                               \end{array}                                                                                                                             \right)| a>0, b\in\mathbb{R}
\}$ is the connected and simply connected Lie group integrating the Lie algebra $\g$.

Let $D:G\lon G$ be a crossed homomorphism with respect to the conjugation action on $G$, i.e.,
$$
D(a\cdot_G b)=D(a)\cdot_G a\cdot_G D(b)\cdot_G a^{-1}, \quad \forall a,~b\in G.
$$
In the following, we will characterize the rigidity properties of all crossed homomorphisms with respect to the conjugation action on $G$.
\begin{pro}\label{classfy}
There are three classes of crossed homomorphisms with respect to the conjugation action on $G$ as the following:
\begin{eqnarray*}
D(\left(
   \begin{array}{cc}
     a & b \\
     0 & a^{-1} \\
   \end{array}
 \right))&=&\left(
           \begin{array}{cc}
             1 & \mu ab+\frac{\lambda}{2}(a^{2}-1) \\
             0 & 1 \\
           \end{array}
         \right),\quad \forall \mu, \lambda\in\mathbb{R},\\
\tilde{D}(\left(
  \begin{array}{cc}
    a & b \\
    0 & a^{-1} \\
  \end{array}
\right))&=&\left(
          \begin{array}{cc}
            a^{-1} & -b+qa\ln a \\
            0 & a \\
          \end{array}
        \right),\quad \forall q\in\mathbb{R},
\end{eqnarray*}
and
\begin{equation*}
\bar{D}(\left(
   \begin{array}{cc}
     a & b \\
     0 & a^{-1} \\
   \end{array}
 \right))=\left(
           \begin{array}{cc}
             a^{p} &  q\frac{a^{2+p}-a^{-p}}{2(p+1)}-ba^{1+p}\\
             0 & a^{-p} \\
           \end{array}
         \right),\quad p, q\in\mathbb{R}~~\text{and}~~p\neq 0, -1.
\end{equation*}
\end{pro}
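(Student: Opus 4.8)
The plan is to transfer the problem to the Lie algebra $\g$, where the classification becomes a finite linear-algebra computation, and then to integrate. Since $G$ is connected and simply connected, the correspondence between crossed homomorphisms on Lie groups and on Lie algebras (the integration theorem of \cite{GLS,JS} recalled above) gives a bijection $D\longleftrightarrow\mathfrak{d}=D_{*e_G}$ between crossed homomorphisms $D\colon G\lon G$ with respect to the conjugation action and crossed homomorphisms $\mathfrak{d}\colon\g\lon\g$ with respect to the adjoint action (the differential of conjugation). Thus it suffices to classify all $\mathfrak{d}$ and then exhibit, for each, the unique $D$ integrating it; the explicit formulas in the statement will be verified to be crossed homomorphisms with the prescribed differential, and uniqueness of integration will guarantee that the list is complete.

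First I would classify the infinitesimal objects. Writing $\mathfrak{d}(e_1)=\alpha e_1+\beta e_2$ and $\mathfrak{d}(e_2)=\gamma e_1+\delta e_2$ and substituting into the defining identity $\mathfrak{d}([e_1,e_2])=[\mathfrak{d}(e_1),e_2]+[e_1,\mathfrak{d}(e_2)]+[\mathfrak{d}(e_1),\mathfrak{d}(e_2)]$ together with $[e_1,e_2]=2e_2$, comparison of the $e_1$- and $e_2$-coefficients yields $\gamma=0$ and $\alpha(1+\delta)=0$. Hence the solution set is the union of two families: the family $\alpha=0$ (with $\beta,\delta$ free), giving $\mathfrak{d}(e_1)=\beta e_2$, $\mathfrak{d}(e_2)=\delta e_2$; and the family $\delta=-1$ (with $\alpha,\beta$ free), giving $\mathfrak{d}(e_1)=\alpha e_1+\beta e_2$, $\mathfrak{d}(e_2)=-e_2$.

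Next I would integrate each family. The value of $D$ on a one-parameter subgroup $t\mapsto D(\exp_G(tx))$ is the solution of the ordinary differential equation $\dot d(t)=(L_{d(t)})_{*e_G}\,\Ad_{\exp_G(tx)}\mathfrak{d}(x)$ with $d(0)=e_G$, and the value on a general element $g=\exp_G(s e_1)\exp_G(t e_2)$ (with $s=\ln a$, $t=b/a$) is then recovered from the cocycle identity $D(g)=D(\exp_G(s e_1))\cdot\exp_G(s e_1)\,D(\exp_G(t e_2))\,\exp_G(s e_1)^{-1}$. Carrying out the two scalar ODEs in the matrix realization produces $a^{\alpha}$ on the diagonal and, for the off-diagonal entry, the combination $\frac{\beta}{2(1+\alpha)}\big(a^{2+\alpha}-a^{-\alpha}\big)$ coming from the $e_1$-direction together with a conjugated $-b\,a^{1+\alpha}$ coming from the $e_2$-direction. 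Matching parameters, the family $\alpha=0$ integrates to the first class $D$ (with $\lambda=\beta$, $\mu=\delta$), while the family $\delta=-1$, $\alpha=p$ integrates to the third class $\bar D$ when $p\neq0,-1$.

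The main obstacle is the behaviour of the integration at the resonant and degenerate parameter values. When $\alpha=-1$ the integrating factor in the off-diagonal ODE produces a term $\int e^{0\cdot t}\,dt=t$ instead of an exponential, so the solution acquires a logarithm $a\ln a$; this is exactly the second class $\tilde D$, which is therefore listed separately although it is the $p=-1$ limit of $\bar D$. When $p=0$ the diagonal collapses to the identity and the resulting map is already an instance of the first class $D$ (with $\mu=-1$), which is why $p=0$ must be excluded from $\bar D$ to avoid redundancy. Finally, exhaustiveness and disjointness follow formally: the Lie-algebra classification is complete, the integration map is a bijection, and the stated parameter restrictions $p\neq0,-1$ precisely remove the overlaps, so every crossed homomorphism on $G$ is exactly one member of the three listed classes.
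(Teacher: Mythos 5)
Your proposal is correct and follows essentially the same route as the paper: classify the crossed homomorphisms on $\g$ (obtaining the same two families, $\alpha=0$ or $\delta=-1$, from $\gamma=0$ and $\alpha(1+\delta)=0$), then invoke the bijection between crossed homomorphisms on $\g$ and on the connected, simply connected $G$ given by the integration theorem of \cite{JS} to pass to the group level. The only difference is presentational: you derive the group formulas by solving the defining ODE along one-parameter subgroups (which cleanly explains the logarithm at the resonance $p=-1$ and the exclusion $p=0$), whereas the paper exhibits the three formulas and verifies by direct computation that they are crossed homomorphisms whose differentials exhaust the Lie algebra classification.
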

\begin{proof}
A linear map $\frak{d}:\g\lon\g$ given by
$$
\frak{d}(e_1, e_2)=(e_1, e_2)
\left(
\begin{array}{cc}
d_{11} & d_{12} \\
d_{21} & d_{22} \\
\end{array}
\right)
$$
is a crossed homomorphism with respect to $\ad$ on $\g$ if and only if
$$
\frak{d}([e_1, e_2])=[\frak{d}(e_1), e_2]+[e_1, \frak{d}(e_2)]+[\frak{d}(e_1), \frak{d}(e_2)].
$$
By straightforward computation, there are the following two classes of crossed homomorphisms on $\g$:
$$
\frak{d}=
\left(
\begin{array}{cc}
0 & 0 \\
\lambda & \mu \\
\end{array}
\right), \quad \text{or} \quad
\frak{d}=
\left(
\begin{array}{cc}
p & 0 \\
q & -1 \\
\end{array}
\right),
\quad p\neq0.
$$
Thus, we have
$$
\frak{d}(\left(
     \begin{array}{cc}
       x & y \\
       0 & -x \\
     \end{array}
   \right))=\left(
             \begin{array}{cc}
               0 & \lambda x+\mu y \\
               0 & 0 \\
             \end{array}
           \right)
, \quad\text{and}\quad
\frak{d}(\left(
     \begin{array}{cc}
       x & y \\
       0 & -x \\
     \end{array}
   \right))=\left(
             \begin{array}{cc}
               px & qx-y \\
               0 & -px \\
             \end{array}
           \right),\quad p\neq 0.
$$
Indeed, every crossed homomorphism on $\g$ can be integrated into a unique crossed homomorphism on $G$ according to the \cite[Theorem 5.5]{JS}. By straightforward computation, the following smooth maps
\begin{eqnarray*}
D(\left(
   \begin{array}{cc}
     a & b \\
     0 & a^{-1} \\
   \end{array}
 \right))&=&\left(
           \begin{array}{cc}
             1 & \mu ab+\frac{\lambda}{2}(a^{2}-1) \\
             0 & 1 \\
           \end{array}
         \right),\quad \forall \mu, ~~\lambda\in\mathbb{R},\\
\tilde{D}(\left(
  \begin{array}{cc}
    a & b \\
    0 & a^{-1} \\
  \end{array}
\right))&=&\left(
          \begin{array}{cc}
            a^{-1} & -b+qa\ln a \\
            0 & a \\
          \end{array}
        \right),\quad \forall q\in\mathbb{R},
\end{eqnarray*}
and
\begin{equation*}
\bar{D}(\left(
   \begin{array}{cc}
     a & b \\
     0 & a^{-1} \\
   \end{array}
 \right))=\left(
           \begin{array}{cc}
             a^{p} &  q\frac{a^{2+p}-a^{-p}}{2(p+1)}-ba^{1+p}\\
             0 & a^{-p} \\
           \end{array}
         \right),\quad p, q\in\mathbb{R}~~\text{and}~~p\neq 0, -1,
\end{equation*}
are all crossed homomorphisms with respect to the conjugation action. Moreover
\begin{eqnarray*}
\frac{d}{dt}|_{t=0}D(\left(
   \begin{array}{cc}
     e^{tx} & ty \\
     0 & e^{-tx} \\
   \end{array}
 \right))&=&\left(
           \begin{array}{cc}
             0 & \mu y+\lambda x \\
             0 & 0 \\
           \end{array}
         \right),\quad \forall \mu, \lambda\in\mathbb{R},\\
\frac{d}{dt}|_{t=0}\tilde{D}(\left(
  \begin{array}{cc}
    e^{tx} & ty \\
    0 & e^{-tx} \\
  \end{array}
\right))&=&\left(
          \begin{array}{cc}
            -x & -y+qx \\
            0 & x \\
          \end{array}
        \right),\quad \forall q\in\mathbb{R},
\end{eqnarray*}
and
\begin{equation*}
\frac{d}{dt}\bar{D}(\left(
   \begin{array}{cc}
     e^{tx} & ty \\
     0 & e^{-tx} \\
   \end{array}
 \right))=\left(
           \begin{array}{cc}
             px &  qx-y\\
             0 & -px \\
           \end{array}
         \right),\quad p, q\in\mathbb{R}~~\text{and}~~p\neq 0, -1.
\end{equation*}
Therefore, there are three classes of crossed homomorphisms with respect to the conjugation action on $G$.
\end{proof}
Define three sets as following:
\begin{eqnarray*}
 \Gamma_1&=&\Big\{D: G\lon G~~\big|~~D(\left(
  \begin{array}{cc}
    a & b \\
    0 & a^{-1} \\
  \end{array}
\right))=\left(
          \begin{array}{cc}
            a^{-1} & -b+qa\ln a \\
            0 & a \\
          \end{array}
        \right),~~~~\forall q\in\mathbb{R} \Big\},\\
\Gamma_2&=&\Big\{D: G\lon G~~\big|~~~~D(\left(
   \begin{array}{cc}
     a & b \\
     0 & a^{-1} \\
   \end{array}
 \right))=\left(
           \begin{array}{cc}
             1 & \mu ab+\frac{\lambda}{2}(a^{2}-1) \\
             0 & 1 \\
           \end{array}
         \right), ~~~~\forall \mu, \lambda\in\mathbb{R}\Big\},
\end{eqnarray*}
and
$$
\Gamma_3=\Big\{D: G\lon G~~\big|~~D(\left(
   \begin{array}{cc}
     a & b \\
     0 & a^{-1} \\
   \end{array}
 \right))=\left(
           \begin{array}{cc}
             a^p &  q\frac{a^{2+p}-a^{-p}}{2(p+1)}-ba^{1+p}\\
             0 & a^{-p} \\
           \end{array}
         \right),~~\forall p, q\in\mathbb{R}~~\text{and}~~p\neq 0, -1\Big\}.
$$
\begin{pro}\label{formular}
Let $D$ be a crossed homomorphism with respect to the conjugation action on $G$ and $\huaD: G\times\mathbb{R}\lon G$ be a deformation of $D$. Then there exists $\epsilon>0$ such that $\huaD(\cdot, s)$ and $D$ belong into the same set $\Gamma_i$ for all $s\in (-\epsilon, \epsilon)$.
\end{pro}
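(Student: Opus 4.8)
The plan is to push everything down to the Lie algebra and read the three classes off a single scalar invariant. Since $G$ is connected and simply connected, the correspondence between crossed homomorphisms on $G$ and on $\g$ recalled before Proposition~\ref{classfy} identifies each member $D_s=\huaD(\cdot,s)$ of the deformation with its tangent map $\frak{d}_s=(D_s)_{*e_G}:\g\lon\g$. Writing the matrix of $\frak{d}_s$ in the basis $(e_1,e_2)$ with entries $d_{ij}(s)$, the classification in Proposition~\ref{classfy} shows that $\frak{d}_s$ being a crossed homomorphism forces $d_{12}(s)=0$ together with the alternative ``$d_{11}(s)=0$ or $d_{22}(s)=-1$''. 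Comparing with the three normal forms there, the class is detected purely by $p(s):=d_{11}(s)$: one has $D_s\in\Gamma_2$ iff $p(s)=0$, $D_s\in\Gamma_1$ iff $p(s)=-1$, and $D_s\in\Gamma_3$ iff $p(s)\notin\{0,-1\}$ (in the latter two cases the alternative then forces $d_{22}(s)=-1$). So the whole proposition reduces to understanding how $p(s)$ and $d_{22}(s)$ vary with $s$.

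First I would note that $s\mapsto\frak{d}_s$ is smooth: differentiating the smooth map $t\mapsto\huaD(\exp_G(tx),s)$ at $t=0$ produces $(D_s)_{*e_G}(x)$, and this depends smoothly on $s$ because $\huaD$ is smooth on $G\times\mathbb{R}$; hence every entry $d_{ij}(s)$, in particular $p(s)$ and $d_{22}(s)$, is a smooth function of $s$. Two of the cases then follow from continuity alone. If $D=D_0\in\Gamma_3$, then $p(0)\notin\{0,-1\}$, and since $\{0,-1\}$ is closed there is $\epsilon>0$ with $p(s)\notin\{0,-1\}$, hence $D_s\in\Gamma_3$, for all $s\in(-\epsilon,\epsilon)$. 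If $D\in\Gamma_2$ with $d_{22}(0)\neq-1$, then $d_{22}(s)\neq-1$ for small $s$, so the alternative above forces $p(s)=0$, i.e. $D_s\in\Gamma_2$ throughout a small interval.

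The main obstacle is the complementary configurations: $D\in\Gamma_1$ (where $p(0)=-1$ and necessarily $d_{22}(0)=-1$) and the sub-case of $D\in\Gamma_2$ sitting on the overlap $d_{22}(0)=-1$. Geometrically the variety of tangent maps is the union of the two planes $\{d_{11}=0\}$ and $\{d_{22}=-1\}$ inside $\{d_{12}=0\}$, meeting along the line $\{d_{11}=0,\ d_{22}=-1\}$, and these initial points sit precisely on or beside that line; there the algebraic alternative no longer pins $p(s)$ down, since a curve may remain on the plane $\{d_{22}=-1\}$ while $p(s)$ slides away from its initial value $0$ or $-1$. Closing these cases thus amounts to proving that $p(s)$ is not merely continuous but \emph{locally constant} at the distinguished values. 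I expect this to be the crux of the argument: one must produce a further constraint on $\huaD$ — beyond its first-order datum $\frak{d}_s$ — that rigidifies the exponent near the intersection line, for instance by testing the explicit integrated formulas of Proposition~\ref{classfy} against the joint smoothness of $\huaD$ in $(a,s)$, since continuity alone leaves room for the class to change there. The two generic cases settled above I expect to be routine once the reduction to $p(s)$ is in place.
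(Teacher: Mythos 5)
Your Lie-algebra reduction and your two ``generic'' cases are correct: smoothness of $s\mapsto\mathfrak{d}_s=(D_s)_{*e_G}$, together with the description of the infinitesimal moduli as the union of the two planes $\{d_{11}=d_{12}=0\}$ and $\{d_{12}=0,\ d_{22}=-1\}$, does settle $D\in\Gamma_3$ and $D\in\Gamma_2$ with $\mu\neq-1$. The gap is in the two cases you defer, hoping for ``a further constraint that rigidifies the exponent near the intersection line.'' No such constraint exists, because the proposition is false exactly there. Indeed, the $\Gamma_3$ normal form of Proposition~\ref{classfy} extends smoothly across $p=-1$ and $p=0$: writing
\begin{equation*}
q\,\frac{a^{2+p}-a^{-p}}{2(p+1)}\;=\;q\,a^{-p}\,\ln a\;E\big((2p+2)\ln a\big),\qquad E(z)=\frac{e^{z}-1}{z},
\end{equation*}
with $E$ entire, the maps $\bar D_{p,q}$ are jointly smooth in $(a,b,p,q)$ for \emph{all} $p$, and they equal the $\Gamma_1$ element with parameter $q$ at $p=-1$, and the $\Gamma_2$ element with $(\mu,\lambda)=(-1,q)$ at $p=0$. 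Consequently
\begin{equation*}
\huaD\left(\begin{pmatrix} a & b\\ 0 & a^{-1}\end{pmatrix},\,s\right)\;=\;\bar D_{-1+s,\,q}\left(\begin{pmatrix} a & b\\ 0 & a^{-1}\end{pmatrix}\right),
\qquad\text{e.g. for } q=0:\quad
\begin{pmatrix} a^{\,s-1} & -b\,a^{s}\\ 0 & a^{\,1-s}\end{pmatrix},
\end{equation*}
is a deformation in the paper's sense (each slice is a crossed homomorphism by Proposition~\ref{classfy} itself) of the $\Gamma_1$ element with parameter $q$, yet it lies in $\Gamma_3$ for every $0<|s|<1$; the analogous family $\bar D_{s,q}$ starts at a $\mu=-1$ element of $\Gamma_2$ and immediately leaves $\Gamma_2$. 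So your strategy cannot be completed: on the plane $\{d_{22}=-1\}$ the exponent $p(s)$ really is free to slide.

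You should not feel you missed a trick, because the paper's own proof breaks at precisely the spot you flagged. It fixes $A_0$ with $a_0\neq1$ and claims there is a neighbourhood $U$ of $D(A_0)$ in $G$ disjoint from the sets of values at $A_0$ of all elements of $\Gamma_2$ and of $\Gamma_3$; but $\bar D_{p,q}(A_0)\to D(A_0)$ as $p\to-1$ (the limit computed above), so every neighbourhood of a $\Gamma_1$-value meets the $\Gamma_3$-values and the required $U$ does not exist (the same failure occurs at the $\mu=-1$ part of $\Gamma_2$ as $p\to0$). The true content of the proposition is exactly your two generic cases, where your tangent-map argument is cleaner than the paper's point-separation argument. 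Note also that the downstream claims depending on this proposition inherit the problem: for $q\neq0$ the deformation $\bar D_{-1+s,q}$ of the $\Gamma_1$ element is not trivial --- differentiating the triviality relation at $e_G$ gives $\mathfrak{d}_s+\Id=\Ad_{\tau(s)}\circ(\mathfrak{d}_0+\Id)$, whose right-hand side has image inside the $\Ad$-invariant line $[\g,\g]=\mathbb{R}e_2$, while $\mathfrak{d}_s+\Id$ has image $\mathbb{R}(se_1+qe_2)$ --- so the rigidity asserted in Theorem~\ref{def1} for $q\neq0$ fails for deformations that leave $\Gamma_1$.
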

\begin{proof}
Suppose that $D\in\Gamma_1$, i.e. $D(\left(
  \begin{array}{cc}
    a & b \\
    0 & a^{-1} \\
  \end{array}
\right))=\left(
          \begin{array}{cc}
            a^{-1} & -b+q_0a\ln a \\
            0 & a \\
          \end{array}
        \right)$ and $\huaD: G\times\mathbb{R}\lon G$ is a deformation of $D$. Then for each $s\in\mathbb{R}$, $\huaD(\cdot, s)$ is a crossed homomorphism which means that $\huaD(\cdot, s)\in \Gamma_1$ or $\Gamma_2$ or $\Gamma_3$.

        Fix $\left(
       \begin{array}{cc}
         a_0 & b_0 \\
         0 & a_0^{-1} \\
       \end{array}
     \right)\in G$, where $a_0\neq 1, b_0\in\mathbb{R}$. As $\huaD(\left(
                                                                      \begin{array}{cc}
                                                                        a_0, & b_0 \\
                                                                        0 & a_0^{-1} \\
                                                                      \end{array}
                                                                    \right)
     , 0)=\left(
            \begin{array}{cc}
              a_0^{-1} & -b_0+q_0a_0\ln a_0 \\
              0 & a_0 \\
            \end{array}
          \right)
     $, there exists an open set $U\ni \huaD(\left(
                                                                      \begin{array}{cc}
                                                                        a_0, & b_0 \\
                                                                        0 & a_0^{-1} \\
                                                                      \end{array}
                                                                    \right)
     , 0)$ of $G$ such that $$U\cap \Big\{\left(
                                      \begin{array}{cc}
                                        1 & \mu a_0b_0+\frac{\lambda}{2}(a_0^{2}-1) \\
                                        0 & 1 \\
                                      \end{array}
                                    \right)\Big|~~\forall \mu, \lambda\in\mathbb{R}\Big\}=\emptyset,
          $$ and $$U\cap\Big\{ \left(
                                                              \begin{array}{cc}
                                                                a_0^{p} & q\frac{a_{0}^{2+p}-a_{0}^{-p}}{2(p+1)}-b_0a_{0}^{1+p} \\
                                                                0 & a_0^{-p} \\
                                                              \end{array}
                                                            \right)\Big|~~\forall p, q\in\mathbb{R}~~\text{and}~~p\neq 0, -1\Big\}=\emptyset.$$Thus there exists $\epsilon>0$ such that
                                                            $\left(
\begin{array}{cc}
a_0 & b_0\\
0& a_0^{-1} \\
\end{array}
\right)\times (-\epsilon, \epsilon)\in\huaD^{-1}(U)$, which means that for any $s\in (-\epsilon, \epsilon)$, we obtain
$$
\huaD(\left(
\begin{array}{cc}
a_0 & b_0\\
0& a_0^{-1} \\
\end{array}
\right), s)\in\Big\{\left(
          \begin{array}{cc}
            a_0^{-1} & -b_0+qa_0\ln a_0 \\
            0 & a_0 \\
          \end{array}
        \right)\Big|~~\forall q\in\mathbb{R}\Big\}.
$$
Therefore, $\huaD(\cdot, s)\in\Gamma_1$ for all $s\in (-\epsilon, \epsilon)$, which means that there exists a smooth function $f:(-\epsilon, \epsilon)\lon \mathbb{R}$ such that
\begin{equation*}
\huaD(\left(
        \begin{array}{cc}
          a & b \\
          0 & a^{-1} \\
        \end{array}
      \right), s
)=\left(
    \begin{array}{cc}
      a^{-1} & -b+f(s)a\ln a \\
      0 & a \\
    \end{array}
  \right), \quad \forall \left(
        \begin{array}{cc}
          a & b \\
          0 & a^{-1} \\
        \end{array}
      \right)\in G, ~~s\in (-\epsilon, \epsilon).
\end{equation*}
Similarly, suppose that $\tilde{D}\in\Gamma_2$ and $\tilde{\huaD}$ is a deformation $\tilde{D}$. There exists $\epsilon_2>0$ and smooth functions
$k:(-\epsilon_2, \epsilon_2)\lon\mathbb{R}, ~~m: (-\epsilon_2, \epsilon_2)\lon\mathbb{R}$ such that
\begin{equation*}
\tilde{\huaD}(\left(
   \begin{array}{cc}
     a & b \\
     0 & a^{-1} \\
   \end{array}
 \right), s)=\left(
           \begin{array}{cc}
             1 & k(s) ab+\frac{m(s)}{2}(a^{2}-1) \\
             0 & 1 \\
           \end{array}
         \right),\quad \forall \left(
        \begin{array}{cc}
          a & b \\
          0 & a^{-1} \\
        \end{array}
      \right)\in G, ~~s\in (-\epsilon_2, \epsilon_2).
\end{equation*}
Similarly, suppose that $\bar{D}\in\Gamma_3$ and $\bar{\huaD}$ is a deformation of $\bar{D}$. There exists $\epsilon_3>0$ and smooth functions $g:(-\epsilon_3, \epsilon_3)\lon\mathbb{R}, ~~h:(-\epsilon_3, \epsilon_3)\lon\mathbb{R}$ where $g(s)\neq0,-1 $ for all $s\in (-\epsilon_3, \epsilon_3)$, such that
$$
\bar{\huaD}(\left(
   \begin{array}{cc}
     a & b \\
     0 & a^{-1} \\
   \end{array}
 \right), s)=\left(
           \begin{array}{cc}
             a^{g(s)} &  h(s)\frac{a^{2+g(s)}-a^{-g(s)}}{2(g(s)+1)}-ba^{1+g(s)}\\
             0 & a^{-g(s)} \\
           \end{array}
         \right), \quad \forall \left(
        \begin{array}{cc}
          a & b \\
          0 & a^{-1} \\
        \end{array}
      \right)\in G, ~~s\in (-\epsilon_3, \epsilon_3).
$$
The proof is finished.
\end{proof}
{\bf Case $1$.} Consider a crossed homomorphism $D$ with respect to the conjugation action on $G$ as following
$$
D(\left(
    \begin{array}{cc}
      a & b \\
      0 & a^{-1} \\
    \end{array}
  \right))=\left(
          \begin{array}{cc}
            a^{-1} & -b+qa\ln a \\
            0 & a \\
          \end{array}
        \right),\quad \forall\left(
    \begin{array}{cc}
      a & b \\
      0 & a^{-1} \\
    \end{array}
  \right)\in G,
$$
where $q$ is a fixed real number. Let $\huaD: G\times\mathbb{R}$ be a deformation of $D$. According to the Proposition \ref{formular}, there exists an interval $(-\epsilon, \epsilon)$ such that
$$
\huaD(\left(
    \begin{array}{cc}
      a & b \\
      0 & a^{-1} \\
    \end{array}
  \right), s)=\left(
          \begin{array}{cc}
            a^{-1} & -b+f(s)a\ln a \\
            0 & a \\
          \end{array}
        \right),\quad \forall\left(
    \begin{array}{cc}
      a & b \\
      0 & a^{-1} \\
    \end{array}
  \right)\in G,~~ \forall s\in(-\epsilon, \epsilon),
$$
where $f: (-\epsilon, \epsilon)\lon\mathbb{R}$ is a smooth function and $f(0)=q$. We always assume that $f(s)$ is not identically zero on any small open interval $I\subset(-\epsilon, \epsilon)$.
\begin{thm}\label{def1}
With the above notations, if $q=f(0)\neq 0$, then the deformation $\huaD$ is trivial.If $q=f(0)=0$, then the deformation $\huaD$ is nontrivial.
\end{thm}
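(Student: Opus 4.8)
The plan is to verify triviality directly from the definition, exploiting the explicit matrix description afforded by Proposition \ref{formular}. Since both $D$ and $D_t=\huaD(\cdot,s)$ lie in $\Gamma_1$ and differ only in the coefficient of the $a\ln a$ term ($q$ for $D$, $f(t)$ for $D_t$), the question of triviality becomes whether a smooth curve $\tau(t)$ in $G$ with $\tau(0)$ the identity can implement the conjugation-equivalence $D_t(g)=\tau(t)\cdot_G D(g)\cdot_G\Theta(g)\tau(t)^{-1}$ for every $g\in G$, where $\Theta(g)h=g\cdot_G h\cdot_G g^{-1}$. First I would parametrize $\tau(t)=\begin{pmatrix}\alpha(t)&\beta(t)\\0&\alpha(t)^{-1}\end{pmatrix}$ with $\alpha(0)=1$, $\beta(0)=0$, and $g=\begin{pmatrix}a&b\\0&a^{-1}\end{pmatrix}$.

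The key computational step is to evaluate the right-hand side $\tau(t)\cdot_G D(g)\cdot_G g\cdot_G\tau(t)^{-1}\cdot_G g^{-1}$ explicitly. A direct multiplication of these four upper-triangular matrices shows that the diagonal entries always come out to $a^{-1}$ and $a$ irrespective of $\tau(t)$, that the off-diagonal parameter $\beta(t)$ cancels completely, and that the upper-right entry equals $-b+\alpha(t)^2\,q\,a\ln a$. Hence the gauge/conjugation action does nothing more than rescale the single structural parameter $q$ by the positive factor $\alpha(t)^2$. Comparing with $D_t(g)=\begin{pmatrix}a^{-1}&-b+f(t)a\ln a\\0&a\end{pmatrix}$, the equivalence for all $g$ collapses to the single scalar equation $\alpha(t)^2\,q=f(t)$.

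The conclusion then follows by a case analysis on $q=f(0)$. If $q\neq0$, then since $f$ is continuous with $f(0)=q$, the ratio $f(t)/q$ is positive on a sufficiently small interval $I=(-\varepsilon,\varepsilon)$; setting $\alpha(t)=\sqrt{f(t)/q}$ (smooth, positive, with $\alpha(0)=1$) and $\beta\equiv0$ produces the required $\tau$, so $\huaD$ is trivial. If $q=0$, the factor $\alpha(t)^2$ multiplies $q=0$, so the upper-right entry of the gauge transform is forced to remain $-b$; matching $D_t$ would require $f(t)a\ln a\equiv0$, i.e. $f\equiv0$ on an interval, contradicting the standing assumption that $f$ is not identically zero on any small open interval. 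Thus $\huaD$ is nontrivial.

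The main obstacle is the matrix computation in the second step, together with the recognition it yields: that conjugation can only multiply the $a\ln a$-coefficient by the positive scalar $\alpha^2$, never change its sign nor create it from nothing. This positivity is precisely what separates the two cases—when $q\neq0$ every nearby coefficient $f(t)$ (which shares the sign of $q$) is reachable, whereas when $q=0$ the coefficient is frozen at $0$. One could alternatively route the argument through the cohomological criterion of Theorem \ref{trivial}, but the explicit description here makes the direct verification both shorter and more transparent.
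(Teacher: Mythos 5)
Your proposal is correct, and it establishes both directions by a genuinely different route from the paper, at least for the nontriviality half. The paper works cohomologically: it computes the $1$-cocycle $\hat{D}_s\bigl(\begin{smallmatrix} a & b \\ 0 & a^{-1}\end{smallmatrix}\bigr)=\bigl(\begin{smallmatrix} 0 & f'(s)\ln a \\ 0 & 0\end{smallmatrix}\bigr)$; for $q\neq 0$ it exhibits a smooth primitive $\kappa(s)$ of $\hat D_s$ and then (just as you do) writes down the explicit gauge $\tau(s)$ with diagonal $\sqrt{f(s)/f(0)}$, $\sqrt{f(0)/f(s)}$ and verifies triviality directly; for $q=0$ it assumes a smooth $\kappa$ solves $\dM^{D_s}\kappa(s)=\hat D_s$, extracts the ODE $f'(s)+2\kappa_1(s)f(s)=0$, concludes $f\equiv 0$ by an integrating factor, contradicting the standing assumption, and then invokes Theorem \ref{trivial} (trivial $\Rightarrow$ classes vanish smoothly) to get nontriviality. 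You bypass the cocycle and Theorem \ref{trivial} entirely: your key computation $D(g)\cdot g=\bigl(\begin{smallmatrix}1 & q\ln a\\ 0 & 1\end{smallmatrix}\bigr)$ together with $\tau\bigl(\begin{smallmatrix}1 & c\\ 0 & 1\end{smallmatrix}\bigr)\tau^{-1}=\bigl(\begin{smallmatrix}1 & \alpha^2 c\\ 0 & 1\end{smallmatrix}\bigr)$ (which I checked; $\beta$ indeed cancels) identifies the whole conjugation orbit of $D$ inside $\Gamma_1$: the parameter $q$ can only be rescaled to $\alpha(t)^2 q$, so equivalence collapses to $\alpha(t)^2 q=f(t)$. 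This buys a sharper and more elementary argument -- nontriviality follows straight from the definition because a zero coefficient can never be gauged into a nonzero one, and the positivity of $\alpha^2$ even shows exactly which $f$ are reachable from a given $q$. What the paper's heavier route buys is uniformity: the cocycle/coboundary-ODE mechanism does not depend on having computed the full orbit explicitly, and it is the same template reused verbatim in Cases 2 and 3 (Theorems \ref{def3} and \ref{def2}).
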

\begin{proof}
By \eqref{eqcoD}, we have that
\begin{eqnarray*}
\hat{D}_s(\left(
           \begin{array}{cc}
             a & b \\
             0 & a^{-1} \\
           \end{array}
         \right))&=&\frac{d}{dt}|_{t=0}\huaD(\left(
                                             \begin{array}{cc}
                                               a & b \\
                                               0 & a^{-1} \\
                                             \end{array}
                                           \right)
         , s+t)\left(
               \begin{array}{cc}
                 a^{-1} & b-f(s)a\ln a \\
                 0 & a \\
               \end{array}
             \right)\\
             &=&\frac{d}{dt}|_{t=0}\left(
                                             \begin{array}{cc}
                                               a^{-1} & -b+f(s+t)a\ln a \\
                                               0 & a \\
                                             \end{array}
                                           \right)\left(
               \begin{array}{cc}
                 a & b-f(s)a\ln a \\
                 0 & a^{-1} \\
               \end{array}
             \right)\\
             &=&\left(
                  \begin{array}{cc}
                    0 & f'(s)\ln a \\
                    0 & 0 \\
                  \end{array}
                \right).
\end{eqnarray*}
If $f(0)\neq0$, there exists an open interval $I\subset(-\epsilon, \epsilon)$ such that $f(s)\neq0$ for all $s\in I$. Define a smooth map $\kappa:I\lon\g$ by $\kappa(s)=\left(
                                                                                                                                                                          \begin{array}{cc}
                                                                                                                                                                            -\frac{f'(s)}{2f(s)} & \kappa_2(s) \\
                                                                                                                                                                             0& \frac{f'(s)}{2f(s)} \\
                                                                                                                                                                          \end{array}
                                                                                                                                                                        \right)
$, where $\kappa_2: I\lon \mathbb{R}$ is a smooth function. Then by \eqref{deficob}, it follows that
\begin{eqnarray*}
\dM^{D_s}(\left(
          \begin{array}{cc}
            -\frac{f'(s)}{2f(s)} & \kappa_2(s) \\
            0 & \frac{f'(s)}{2f(s)} \\
          \end{array}
        \right))\left(
           \begin{array}{cc}
             a & b \\
             0 & a^{-1} \\
           \end{array}
         \right)
         &=&\Theta_{D_s}(\left(
           \begin{array}{cc}
             a & b \\
             0 & a^{-1} \\
           \end{array}
         \right))\left(
          \begin{array}{cc}
            -\frac{f'(s)}{2f(s)} & \kappa_2(s) \\
            0 & \frac{f'(s)}{2f(s)} \\
          \end{array}
        \right)-\left(
           \begin{array}{cc}
            -\frac{f'(s)}{2f(s)}  & \kappa_2(s) \\
             0 & \frac{f'(s)}{2f(s)} \\
           \end{array}
         \right)\\
         &=&\left(
                  \begin{array}{cc}
                    0 & f'(s)\ln a \\
                    0 & 0 \\
                  \end{array}
                \right)=\hat{D}_s\left(
           \begin{array}{cc}
             a & b \\
             0 & a^{-1} \\
           \end{array}
         \right).
\end{eqnarray*}
Moreover, define a smooth map $\tau:I\lon G$ by $
\tau(s)=\left(
          \begin{array}{cc}
           \sqrt{\frac{f(s)}{f(0)}} & \tau_2(s) \\
            0 & \sqrt{\frac{f(0)}{f(s)}} \\
          \end{array}
        \right)
$, where $\tau_2: I\lon\mathbb{R}$ is a smooth function. Then
\begin{eqnarray*}
&&\huaD(\left(
          \begin{array}{cc}
            a & b \\
            0 & a^{-1} \\
          \end{array}
        \right)
,s)\\
&=&\left(
    \begin{array}{cc}
      a^{-1} & -b-f(s)a\ln a \\
      0 & a \\
    \end{array}
  \right)\\&=&\left(
          \begin{array}{cc}
            \sqrt{\frac{f(s)}{f(0)}} & \tau_2(s) \\
            0 & \sqrt{\frac{f(0)}{f(s)}} \\
          \end{array}
        \right)\left(
                 \begin{array}{cc}
                   a^{-1} & -b-qa\ln a \\
                   0 & a \\
                 \end{array}
               \right)\left(
                        \begin{array}{cc}
                          a & b \\
                          0 & a^{-1} \\
                        \end{array}
                      \right)\left(
                               \begin{array}{cc}
                                \sqrt{\frac{f(0)}{f(s)}} & -\tau_2(s) \\
                                 0 & \sqrt{\frac{f(s)}{f(0)}} \\
                               \end{array}
                             \right)\left(
                                      \begin{array}{cc}
                                        a^{-1} & -b \\
                                        0 & a \\
                                      \end{array}
                                    \right)\\
&=&\tau(s)D(\left(
              \begin{array}{cc}
                a & b \\
                0 & a^{-1} \\
              \end{array}
            \right))\left(
                             \begin{array}{cc}
                               a & b \\
                               0 & a^{-1} \\
                             \end{array}
                           \right
            )\tau(s)^{-1}\left(
                             \begin{array}{cc}
                               a^{-1} & -b \\
                               0 & a \\
                             \end{array}
                           \right ).
\end{eqnarray*}
Thus the deformation $\huaD$ is trivial.

If $f(0)=0$, assume that there exists an open interval $I\subset(-\epsilon, \epsilon)$ and a smooth map $\kappa:I\lon\g$ such that $\dM^{D_s}(\kappa(s))=\hat{D}_s$ for all $s\in I$. By \eqref{deficob}, it follows that
\begin{eqnarray*}
\dM^{D_s}(\left(
          \begin{array}{cc}
            \kappa_1(s) & \kappa_2(s) \\
            0 & -\kappa_1(s) \\
          \end{array}
        \right))\left(
           \begin{array}{cc}
             a & b \\
             0 & a^{-1} \\
           \end{array}
         \right)&=&\Theta_{D_s}(\left(
           \begin{array}{cc}
             a & b \\
             0 & a^{-1} \\
           \end{array}
         \right))\left(
          \begin{array}{cc}
            \kappa_1(s) & \kappa_2(s) \\
            0 & -\kappa_1(s) \\
          \end{array}
        \right)-\left(
           \begin{array}{cc}
            \kappa_1(s)  & \kappa_2(s) \\
             0 & -\kappa_1(s) \\
           \end{array}
         \right)\\
         &=&\left(
              \begin{array}{cc}
                0 & -2\kappa_1(s)f(s)\ln a \\
                0 & 0 \\
              \end{array}
            \right).
\end{eqnarray*}
Thus $f'(s)+2\kappa_1(s)f(s)=0$, it means that
\begin{equation}\label{odef}
f(s)e^{2\int_0^{s}\kappa_1(u)du}=f(0)=0.
\end{equation}
As $f(s)$ is not identically zero on any small open interval $J\subset(-\epsilon, \epsilon)$, then \eqref{odef} doesn't hold. Thus we can't find the $I$ and a smooth function $\kappa: I\lon\g$ such that $\dM^{D_s}(\kappa(s))=\hat{D}_s$, which means that the deformation $\huaD$ is nontrivial.
\end{proof}

{\bf Case $2$.}
Consider a crossed homomorphism $D$ with respect to the conjugation action on $G$ as following
$$
D(\left(
    \begin{array}{cc}
      a & b \\
      0 & a^{-1} \\
    \end{array}
  \right))=\left(
          \begin{array}{cc}
            1 & \mu ab+\frac{\lambda}{2}(a^{2}-1) \\
            0 & 1 \\
          \end{array}
        \right),\quad \forall\left(
    \begin{array}{cc}
      a & b \\
      0 & a^{-1} \\
    \end{array}
  \right)\in G,
$$
where $\mu$ and $\lambda$ are fixed real numbers. Let $\huaD: G\times\mathbb{R}\lon G$ be a deformation of $D$. According to the Proposition \ref{formular}, there exists an interval $(-\epsilon, \epsilon)$ such that
$$
\huaD(\left(
    \begin{array}{cc}
      a & b \\
      0 & a^{-1} \\
    \end{array}
  \right), s)=\left(
          \begin{array}{cc}
            1 & k(s) ab+\frac{m(s)}{2}(a^2-1) \\
            0 & 1 \\
          \end{array}
        \right),\quad \forall\left(
    \begin{array}{cc}
      a & b \\
      0 & a^{-1} \\
    \end{array}
  \right)\in G,~~ \forall s\in(-\epsilon, \epsilon),
$$
where $k: (-\epsilon, \epsilon)\lon\mathbb{R}$ and $m: (-\epsilon, \epsilon)\lon\mathbb{R}$ are smooth functions and $k(0)=\mu, m(0)=\lambda$.
\begin{thm}\label{def3}
With the above notations, if $k(0)\neq-1$, then the deformation $\huaD$ is trivial. If $k(0)=-1$ and $k$ is not flat at $0$, which means that there exists $l\in Z_{+}$ such that $k^{(n)}(0)=0$ for all $1\leq n\leq l$, and $k^{(l+1)}(0)\neq 0$ (Here, $k^{(n)}$ means the $n$-th derivative of $k$), then the deformation $\huaD$ is nontrivial.
\end{thm}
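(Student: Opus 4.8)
The plan is to follow the template of Theorem \ref{def1}, reducing the triviality question to a pair of scalar identities and then treating the two ranges of $k(0)$ separately. First I would compute the $1$-cocycle $\hat{D}_s$ from \eqref{eqcoD}: writing $g=\left(\begin{smallmatrix}a&b\\0&a^{-1}\end{smallmatrix}\right)$ and differentiating the explicit formula for $\huaD(g,s)$, right translation by $D_s(g)^{-1}$ annihilates the unipotent factor and leaves
$$
\hat{D}_s(g)=\left(\begin{array}{cc}0 & k'(s)ab+\frac{m'(s)}{2}(a^2-1)\\0&0\end{array}\right).
$$
Next I would make the representation explicit. Since the action is conjugation, $\Theta_{D_s}(g)=\Ad_{D_s(g)\cdot_G g}$, and a direct multiplication gives $D_s(g)\cdot_G g=\left(\begin{smallmatrix}a&c\\0&a^{-1}\end{smallmatrix}\right)$ with $c=b(1+k(s))+\frac{m(s)}{2}(a-a^{-1})$. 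Applying $\Ad$ to a general $0$-cochain $\kappa(s)=\left(\begin{smallmatrix}\kappa_1(s)&\kappa_2(s)\\0&-\kappa_1(s)\end{smallmatrix}\right)$ and using \eqref{deficob}, the coboundary $\dM^{D_s}\kappa(s)(g)$ has top-right entry $(a^2-1)(\kappa_2-m\kappa_1)-2ab(1+k)\kappa_1$. Comparing with $\hat{D}_s(g)$ and separating the functionally independent terms $ab$ and $(a^2-1)$, the equation $\dM^{D_s}\kappa(s)=\hat{D}_s$ is equivalent to the two scalar conditions
$$
-2(1+k(s))\kappa_1(s)=k'(s),\qquad \kappa_2(s)-m(s)\kappa_1(s)=\frac{m'(s)}{2}.
$$

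When $k(0)\neq-1$, I have $1+k(s)\neq0$ on a neighbourhood of $0$, so both conditions are solved by smooth functions $\kappa_1,\kappa_2$, which already shows $[\hat{D}_s]$ vanishes smoothly. To upgrade this from the pointwise triviality furnished by Theorem \ref{trivial} to genuine triviality I would exhibit the gauge transformation directly: set $\tau(s)=\left(\begin{smallmatrix}\alpha(s)&\beta(s)\\0&\alpha(s)^{-1}\end{smallmatrix}\right)$ with $\alpha(s)=\sqrt{(1+k(s))/(1+\mu)}$ and $\beta(s)=\frac{\lambda\alpha(s)}{2}-\frac{m(s)}{2\alpha(s)}$, where $\mu=k(0)$ and $\lambda=m(0)$. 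Since $1+\mu\neq0$ the radicand is positive near $0$, so $\tau$ is smooth with $\tau(0)=e_G$. A direct computation of $\tau(s)\cdot_G D(g)\cdot_G g\cdot_G\tau(s)^{-1}\cdot_G g^{-1}$ produces a unipotent matrix whose top-right entry is $[\alpha^2(1+\mu)-1]ab+[\frac{\lambda}{2}\alpha^2-\alpha\beta](a^2-1)$; the choices of $\alpha,\beta$ make this coincide with the top-right entry $k(s)ab+\frac{m(s)}{2}(a^2-1)$ of $\huaD(g,s)$, establishing triviality on the interval.

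When $k(0)=-1$ I would argue by contradiction, mirroring the second half of Theorem \ref{def1}. If $\huaD$ were trivial on some interval $I$, then the argument of Theorem \ref{trivial} yields a smooth $\kappa:I\lon\g$ with $\dM^{D_s}\kappa(s)=\hat{D}_s$, so the first scalar condition $k'(s)+2\kappa_1(s)(1+k(s))=0$ holds on $I$. Integrating this linear ODE with the integrating factor $e^{2\int_0^s\kappa_1}$ gives $(1+k(s))e^{2\int_0^s\kappa_1}=1+k(0)=0$, forcing $k\equiv-1$ on $I$; but then every derivative $k^{(n)}(0)$ vanishes, contradicting $k^{(l+1)}(0)\neq0$, and hence $\huaD$ is nontrivial. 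The routine matrix bookkeeping in the first two steps is lengthy but mechanical; the genuinely delicate point is the trivial case, where smooth vanishing of the cohomology only yields pointwise triviality through Theorem \ref{trivial}, so one must produce the closed-form $\tau$ by hand and verify that its twisted conjugation reproduces $\huaD(\cdot,s)$ exactly. The non-flatness hypothesis is precisely what excludes the degenerate branch $k\equiv-1$, on which the first condition holds automatically for every $\kappa_1$ and the obstruction disappears.
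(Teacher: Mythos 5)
Your proposal is correct and follows essentially the same route as the paper: the same computation of $\hat{D}_s$, the same reduction of $\dM^{D_s}\kappa(s)=\hat{D}_s$ to the two scalar conditions $-2(1+k(s))\kappa_1(s)=k'(s)$ and $\kappa_2(s)-m(s)\kappa_1(s)=\frac{m'(s)}{2}$, and (after unwinding notation) the identical gauge transformation $\tau(s)$ in the case $k(0)\neq-1$. The only cosmetic difference is in the nontrivial branch, where you reach the contradiction by the integrating-factor argument $(1+k(s))e^{2\int_0^s\kappa_1}=1+k(0)=0$ (the method the paper itself uses in Theorem \ref{def1}), whereas the paper factors $k(s)+1=s^{l+1}h(s)$ with $h(0)\neq0$ and evaluates the resulting identity at $s=0$; both arguments exploit the non-flatness hypothesis in equivalent ways.
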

\begin{proof}
By \eqref{eqcoD}, we have that
\begin{eqnarray*}
\hat{D}_s(\left(
           \begin{array}{cc}
             a & b \\
             0 & a^{-1} \\
           \end{array}
         \right))&=&\frac{d}{dt}|_{t=0}\huaD(\left(
                                             \begin{array}{cc}
                                               a & b \\
                                               0 & a^{-1} \\
                                             \end{array}
                                           \right)
         , t+s)\left(
               \begin{array}{cc}
                 1 & -k(s) ab-\frac{m(s)}{2}(a^2-1) \\
                 0 & 1 \\
               \end{array}
             \right)\\
             &=&\frac{d}{dt}|_{t=0}\left(
               \begin{array}{cc}
                 1 & k(s+t)ab+\frac{m(s+t)}{2}(a^2-1)-k(s)ab-\frac{m(s)}{2}(a^{2}-1) \\
                 0 & 1 \\
               \end{array}
             \right)\\
             &=&\left(
                  \begin{array}{cc}
                    0 & k'(s)ab+\frac{m'(s)}{2}(a^{2}-1) \\
                    0 & 0 \\
                  \end{array}
                \right).
\end{eqnarray*}
If $k(0)\neq-1$, there exists an open interval $I\subset(-\epsilon, \epsilon)$ such that $k(s)\neq-1$ for all $s\in I$. Let $\alpha_1(s)=-\frac{k'(s)}{2(k(s)+1)}$ and $\alpha_2(s)=\frac{m'(s)}{2}-\frac{m(s)k'(s)}{2(k(s)+1)}$. Then by \eqref{deficob}, it follows that
\begin{eqnarray*}
\dM^{D_s}(\left(
          \begin{array}{cc}
           \alpha_1(s) & \alpha_2(s) \\
            0 & -\alpha_1(s) \\
          \end{array}
        \right))\left(
           \begin{array}{cc}
             a & b \\
             0 & a^{-1} \\
           \end{array}
         \right)
&=&\Theta_{D_s}(\left(
           \begin{array}{cc}
             a & b \\
             0 & a^{-1} \\
           \end{array}
         \right))\left(
          \begin{array}{cc}
            \alpha_1(s) & \alpha_2(s) \\
            0 & -\alpha_1(s) \\
          \end{array}
        \right)-\left(
           \begin{array}{cc}
            \alpha_1(s)  & \alpha_2(s) \\
             0 & -\alpha_1(s) \\
           \end{array}
         \right)\\
         &=&\left(
              \begin{array}{cc}
                0 & k'(s)ab+\frac{m'(s)}{2}(a^{2}-1) \\
                0 & 0 \\
              \end{array}
            \right)=\hat{D}_s(\left(
           \begin{array}{cc}
             a & b \\
             0 & a^{-1} \\
           \end{array}
         \right)).
\end{eqnarray*}
Moreover, define a smooth map $\tau:I\lon G$ by $
\tau(s)=\left(
          \begin{array}{cc}
           \sqrt{\frac{k(s)+1}{k(0)+1}} & \frac{-m(s)(k(0)+1)+m(0)(k(s)+1)}{2\sqrt{(k(0)+1)(k(s)+1)}} \\
            0 & \sqrt{\frac{k(0)+1}{k(s)+1}} \\
          \end{array}
        \right)
$.
Then
\begin{eqnarray*}
&&\huaD(\left(
          \begin{array}{cc}
            a & b \\
            0 & a^{-1} \\
          \end{array}
        \right)
,s)
=\tau(s)D(\left(
              \begin{array}{cc}
                a & b \\
                0 & a^{-1} \\
              \end{array}
            \right))\left(
                             \begin{array}{cc}
                               a & b \\
                               0 & a^{-1} \\
                             \end{array}
                           \right)\tau(s)^{-1}\left(
                             \begin{array}{cc}
                               a^{-1} & -b \\
                               0 & a \\
                             \end{array}
                           \right).
\end{eqnarray*}
Thus the deformation $\huaD$ is trivial.

If $k(0)=-1$ and $k$ is not flat at $0$, then there exists an open interval $0\in J\subset(-\epsilon, \epsilon)$ such that $$k(s)+1=s^{l+1}h(s)$$ for all $s\in J$, where $h(0)\neq 0$. Assume that there exists an open interval $I\subset J$ and a smooth map $\alpha:I\lon\g$ such that $\dM^{D_s}(\alpha(s))=\hat{D}_s$ for all $s\in I$. By \eqref{deficob}, it follows that
\begin{eqnarray*}
&&\dM^{D_s}(\left(
          \begin{array}{cc}
           \alpha_1(s) & \alpha_2(s) \\
            0 & -\alpha_1(s) \\
          \end{array}
        \right))\left(
           \begin{array}{cc}
             a & b \\
             0 & a^{-1} \\
           \end{array}
         \right)\\
&=&\left(
                   \begin{array}{cc}
                     0 & -2(k(s)\alpha_1(s)+\alpha_1(s))ab+(\alpha_2(s)-m(s)\alpha_1(s))(a^{2}-1) \\
                     0 & 0 \\
                   \end{array}
                 \right).
\end{eqnarray*}
Thus
\begin{eqnarray}
\label{ode1}2(k(s)+1)\alpha_1(s)&=&-k'(s),\\
\nonumber\frac{m'(s)}{2}+m(s)\alpha_1(s)&=&\alpha_2(s),
\end{eqnarray}
which means that $$2sh(s)\alpha_1(s)=-(l+1)h(s)-sh'(s).$$
We have that $0=(l+1)h(0)\neq 0$. Therefore we can't find the $I$ and a smooth function $\alpha: I\lon\g$ such that $\dM^{D_s}(\alpha(s))=\hat{D}_s$, which means that the deformation $\huaD$ is nontrivial.
\end{proof}
\begin{rmk}
When a smooth function $k(s)$ is flat at $s=0$ and $k(0)=-1$. Then it is not possible to determine whether the deformation is trivial or not. For examples, if
\begin{equation*}
k(s)=
\begin{cases}
-1+e^{-\frac{1}{s^{2}}}, & s\neq0;\\
-1, & s=0,
\end{cases}
\end{equation*}
 assume that there exists $\alpha_1(s)$ satisfies the \eqref{ode1}, then $\alpha_1(s)=-\frac{1}{s^3}$ for $s\neq 0$. But $\displaystyle\lim_{s\rightarrow 0}\alpha_1(s)=\infty$, we can't find a smooth function $\alpha_1(s)$ defined on $(-\epsilon, \epsilon)$ satisfies \eqref{ode1}. Thus $\huaD$ is nontrivial.

If $k(s)=-1$ for all $s\in\mathbb{R}$, Let $\tau(s)=\left(
                                                      \begin{array}{cc}
                                                        1 & \frac{\lambda-m(s)}{2} \\
                                                        0 & 1 \\
                                                      \end{array}
                                                    \right)
$. We have
\begin{eqnarray*}
\huaD(\left(
        \begin{array}{cc}
          a & b \\
          0 & a^{-1} \\
        \end{array}
      \right)
,s)
&=&\left(
          \begin{array}{cc}
            1 & -ab+\frac{m(s)}{2}(a^2-1) \\
            0 & 1 \\
          \end{array}
        \right)\\
        &=&\tau(s)\left(
                          \begin{array}{cc}
                            1 & -ab+\frac{\lambda}{2}(a^{2}-1) \\
                            0 & 1 \\
                          \end{array}
                        \right)\left(
                                        \begin{array}{cc}
                                          a & b \\
                                          0 & a^{-1} \\
                                        \end{array}
                                      \right)
                        \tau(s)^{-1}\left(
                             \begin{array}{cc}
                               a^{-1} & -b \\
                               0 & a \\
                             \end{array}
                           \right),
\end{eqnarray*}
which means that $\huaD$ is trivial.
\end{rmk}

{\bf Case $3$.}
Consider a crossed homomorphism $D$ with respect to the conjugation action on $G$ as following
$$
D(\left(
    \begin{array}{cc}
      a & b \\
      0 & a^{-1} \\
    \end{array}
  \right))=\left(
           \begin{array}{cc}
             a^{p} &  q\frac{a^{2+p}-a^{-p}}{2(p+1)}-ba^{1+p}\\
             0 & a^{-p} \\
           \end{array}
         \right),\quad \forall\left(
    \begin{array}{cc}
      a & b \\
      0 & a^{-1} \\
    \end{array}
  \right)\in G,
$$
where $p$ and $q$ are fixed real numbers and $p\neq 0, -1$. Let $\huaD: G\times\mathbb{R}$ be a deformation of $D$. According to the Proposition \ref{formular}, there exists an interval $(-\epsilon, \epsilon)$ such that
$$
\huaD(\left(
    \begin{array}{cc}
      a & b \\
      0 & a^{-1} \\
    \end{array}
  \right), s)=\left(
           \begin{array}{cc}
             a^{g(s)} &  h(s)\frac{a^{2+g(s)}-a^{-g(s)}}{2(g(s)+1)}-ba^{1+g(s)}\\
             0 & a^{-g(s)} \\
           \end{array}
         \right), \quad \forall\left(
    \begin{array}{cc}
      a & b \\
      0 & a^{-1} \\
    \end{array}
  \right)\in G,~~ \forall s\in(-\epsilon, \epsilon),
$$
where $h: (-\epsilon, \epsilon)\lon\mathbb{R}$ and $g: (-\epsilon, \epsilon)\lon\mathbb{R}$ are smooth functions satisfying $h(0)=q, g(0)=p$ and $g(s)\neq0, -1$ for all $s\in (-\epsilon, \epsilon)$.
\begin{thm}\label{def2}
With the above notations, if there exists an open interval $I\subset (-\epsilon, \epsilon)$ such that $g'(s)=0$ for all $s\in I$, then the deformation $\huaD$ is trivial. Otherwise, the deformation $\huaD$ is nontrivial.
\end{thm}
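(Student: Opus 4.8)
The plan is to follow the template established in Cases $1$ and $2$: first evaluate the cocycle $\hat{D}_s$ attached to $\huaD$ through \eqref{eqcoD}, and then dispose of the two alternatives either by producing an explicit trivializing curve $\tau$ or by producing a cohomological obstruction via Theorem \ref{trivial}. Write $A=\left(\begin{smallmatrix} a & b \\ 0 & a^{-1}\end{smallmatrix}\right)$ and $D_s(A)=\left(\begin{smallmatrix} a^{g(s)} & \phi(s) \\ 0 & a^{-g(s)}\end{smallmatrix}\right)$ with $\phi(s)=h(s)\frac{a^{2+g(s)}-a^{-g(s)}}{2(g(s)+1)}-ba^{1+g(s)}$. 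Differentiating $D_{t+s}(A)$ in $t$ at $t=0$ and right-translating by $D_s(A)^{-1}=\left(\begin{smallmatrix} a^{-g(s)} & -\phi(s) \\ 0 & a^{g(s)}\end{smallmatrix}\right)$, a direct multiplication gives
\begin{equation*}
\hat{D}_s(A)=\left(\begin{array}{cc} g'(s)\ln a & a^{g(s)}\big(\phi'(s)-g'(s)\phi(s)\ln a\big) \\ 0 & -g'(s)\ln a \end{array}\right).
\end{equation*}
The decisive structural feature is that the \emph{diagonal} of $\hat{D}_s(A)$ equals $\pm g'(s)\ln a$, whereas its off-diagonal entry can be absorbed freely.

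The obstruction mechanism is then immediate. For the conjugation action one has $\tilde{\Theta}(a)=\Ad_a$, so $\Theta_{D_s}(A)=\Ad_{D_s(A)}\Ad_A=\Ad_{D_s(A)\cdot_G A}$, and by \eqref{deficob} a coboundary takes the form $\dM^{D_s}\kappa(A)=\Ad_{D_s(A)\cdot_G A}\kappa-\kappa$ for $\kappa\in\g$. Since $D_s(A)\cdot_G A$ is upper triangular and $\kappa$ is traceless upper triangular, conjugation by $D_s(A)\cdot_G A$ fixes the diagonal of $\kappa$; hence $\dM^{D_s}\kappa(A)$ is strictly upper triangular for \emph{every} $A$. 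Comparing diagonals, the equation $\dM^{D_s}\kappa(s)=\hat{D}_s$ can hold only if $g'(s)\ln a\equiv 0$ for all $a>0$, that is, only if $g'(s)=0$. This single computation is the heart of the theorem, and I expect confirming the diagonal form of $\hat{D}_s$ (in particular that the $\phi'(s)$ term contributes nothing to the diagonal) to be the one genuinely fiddly step; everything else is bookkeeping.

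For the trivial direction, suppose $g'\equiv 0$ on an interval about $0$, so $g\equiv p$ there and only $h$ varies. Then $\hat{D}_s$ is strictly upper triangular, and taking $\kappa(s)=\left(\begin{smallmatrix} 0 & \frac{h'(s)}{2(p+1)} \\ 0 & 0\end{smallmatrix}\right)$ one checks $\dM^{D_s}\kappa(s)=\hat{D}_s$. More directly, I would set $\tau(s)=\left(\begin{smallmatrix} 1 & \frac{q-h(s)}{2(p+1)} \\ 0 & 1\end{smallmatrix}\right)$, which satisfies $\tau(0)=e_G$, and verify by a single product that $\tau(s)\cdot_G D(A)\cdot_G A\cdot_G\tau(s)^{-1}\cdot_G A^{-1}=D_s(A)$; this exhibits $\huaD$ as trivial.

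For the nontrivial direction, assume instead that $g'$ vanishes on no subinterval. If $\huaD$ were trivial on some interval $I\ni 0$, then the forward implication of Theorem \ref{trivial} would yield a smooth $\kappa\colon I\lon\g$ with $\dM^{D_s}\kappa(s)=\hat{D}_s$ for all $s\in I$. By the diagonal argument above this forces $g'(s)=0$ throughout $I$, contradicting the hypothesis that $g'$ vanishes on no subinterval. Hence $\huaD$ is nontrivial, completing the dichotomy.
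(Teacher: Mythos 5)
Your proposal is correct and follows essentially the same route as the paper: compute $\hat{D}_s$, observe that its diagonal part is $\pm g'(s)\ln a$ while every coboundary $\dM^{D_s}\kappa$ is strictly upper triangular, then settle the dichotomy with an explicit $\tau$ in the trivial case and a contradiction via Theorem \ref{trivial} in the nontrivial case. Your conceptual justification that coboundaries have vanishing diagonal (conjugation by upper triangular matrices fixes the diagonal of a traceless upper triangular matrix) replaces the paper's direct matrix computation, and your $\tau$ is the paper's trivializing curve with $\tau_1\equiv 1$, but the structure and the key obstruction are identical.
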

\begin{proof}
By \eqref{eqcoD}, we have that
\begin{eqnarray*}
&&\hat{D}_s(\left(
           \begin{array}{cc}
             a & b \\
             0 & a^{-1} \\
           \end{array}
         \right))\\
         &=&\frac{d}{dt}|_{t=0}\huaD(\left(
                                             \begin{array}{cc}
                                               a & b \\
                                               0 & a^{-1} \\
                                             \end{array}
                                           \right)
         , s+t)\left(
                                             \begin{array}{cc}
                                               a^{-g(s)} & -h(s)\frac{a^{2+g(s)}-a^{-g(s)}}{2(g(s)+1)}+ba^{1+g(s)} \\
                                               0 & a^{g(s)} \\
                                             \end{array}
                                           \right)\\
             &=&\frac{d}{dt}|_{t=0}\left(
                                             \begin{array}{cc}
                                               a^{g(s+t)} & h(s+t)\frac{a^{2+g(s+t)}-a^{-g(s+t)}}{2(g(s+t)+1)}-ba^{1+g(s+t)} \\
                                               0 & a^{-g(s+t)} \\
                                             \end{array}
                                           \right)\left(
                                             \begin{array}{cc}
                                               a^{-g(s)} & -h(s)\frac{a^{2+g(s)}-a^{-g(s)}}{2(g(s)+1)}+ba^{1+g(s)} \\
                                               0 & a^{g(s)} \\
                                             \end{array}
                                           \right)\\
             &=&\left(
                  \begin{array}{cc}
                    g'(s)\ln a & \frac{h(s)g'(s)\ln a}{g(s)+1}+\frac{h'(s)(a^{2+2g(s)}-1)}{2g(s)+2}-\frac{2h(s)g'(s)(a^{2+2g(s)}-1)}{(2g(s)+2)^{2}} \\
                    0 & -g'(s)\ln a \\
                  \end{array}
                \right).
\end{eqnarray*}
If there exists an open interval $I\subset (-\epsilon, \epsilon)$ such that $g'(s)=0$ for all $s\in I$, define $$\kappa(s)=\left(
                                                                                                                        \begin{array}{cc}
                                                                                                                          \kappa_1(s) & \frac{h'(s)+2h(s)\kappa_1(s)}{2+2p} \\
                                                                                                                          0 & -\kappa_1(s) \\
                                                                                                                        \end{array}
                                                                                                                      \right),$$ where $\kappa_1: I\lon\mathbb{R}$ is a smooth function. Then
\begin{eqnarray*}
&&\dM^{D_s}(\left(
          \begin{array}{cc}
            \kappa_1(s) & \frac{h'(s)+2h(s)\kappa_1(s)}{2+2p} \\
            0 & -\kappa_1(s) \\
          \end{array}
        \right))\left(
           \begin{array}{cc}
             a & b \\
             0 & a^{-1} \\
           \end{array}
         \right)\\
&=&\Theta_{D_s}(\left(
           \begin{array}{cc}
             a & b \\
             0 & a^{-1} \\
           \end{array}
         \right))\left(
          \begin{array}{cc}
            \kappa_1(s) & \frac{h'(s)+2h(s)\kappa_1(s)}{2+2p} \\
            0 & -\kappa_1(s) \\
          \end{array}
        \right)-\left(
           \begin{array}{cc}
            \kappa_1(s)  & \frac{h'(s)+2h(s)\kappa_1(s)}{2+2p} \\
             0 & -\kappa_1(s) \\
           \end{array}
         \right)=\hat{D}_s(\left(
                             \begin{array}{cc}
                               a & b \\
                               0 & a^{-1} \\
                             \end{array}
                           \right)).
\end{eqnarray*}
Moreover, define a smooth map $\tau:I\lon G$ by $
\tau(s)=\left(
          \begin{array}{cc}
           \tau_1(s) & \frac{q\tau(s)}{2p+2}-\frac{h(s)}{(2p+2)\tau_1(s)} \\
            0 & \tau(s)^{-1} \\
          \end{array}
        \right)
$, where $\tau_1: I\lon \mathbb{R}$ is a smooth function. Then
\begin{eqnarray*}
&&\huaD(\left(
          \begin{array}{cc}
            a & b \\
            0 & a^{-1} \\
          \end{array}
        \right)
,s)
=\tau(s)D(\left(
              \begin{array}{cc}
                a & b \\
                0 & a^{-1} \\
              \end{array}
            \right))\left(
                             \begin{array}{cc}
                               a & b \\
                               0 & a^{-1} \\
                             \end{array}
                           \right)\tau(s)^{-1}\left(
                             \begin{array}{cc}
                               a^{-1} & -b \\
                               0 & a \\
                             \end{array}
                           \right).
\end{eqnarray*}
Thus the deformation $\huaD$ is trivial.

If $g'(s)$ is not identically zero on any small open interval $J\subset(-\epsilon, \epsilon)$, assume that there is an open interval $J_1\subset(-\epsilon, \epsilon)$ and a smooth map $\kappa: J_1\lon\mathbb{R}$ such that
\begin{eqnarray*}
&&\dM^{D_s}(\left(
          \begin{array}{cc}
            \kappa_1(s) & \kappa_2(s) \\
            0 & -\kappa_1(s) \\
          \end{array}
        \right))\left(
           \begin{array}{cc}
             a & b \\
             0 & a^{-1} \\
           \end{array}
         \right)\\
&=&\Theta_{D_s}(\left(
           \begin{array}{cc}
             a & b \\
             0 & a^{-1} \\
           \end{array}
         \right))\left(
          \begin{array}{cc}
            \kappa_1(s) & \kappa_2(s) \\
            0 & -\kappa_1(s) \\
          \end{array}
        \right)-\left(
           \begin{array}{cc}
            \kappa_1(s)  & \kappa_2(s) \\
             0 & -\kappa_1(s) \\
           \end{array}
         \right)\\
         &=&\left(
              \begin{array}{cc}
                0 & \kappa_2(s)(a^{2+2g(s)}-1)-\frac{h(s)\kappa_1(s)(a^{2+2g(s)}-1)}{g(s)+1} \\
                0 & 0 \\
              \end{array}
            \right)
         \\
         &=&\hat{D}_s(\left(
                             \begin{array}{cc}
                               a & b \\
                               0 & a^{-1} \\
                             \end{array}
                           \right)),
\end{eqnarray*}
which means that $g'(s)=0$ for all $s\in J_1$. This contradicts the fact that $g'(s)$ is not identically zero on any small open interval $J\subset(-\epsilon, \epsilon)$. Thus the deformation $\huaD$ is nontrivial.
\end{proof}

{\bf Conclusion.} The rigidity properties of all crossed homomorphisms with respect to the conjugation action on $G$ are characterized as follows:
\begin{itemize}
\item The crossed homomorphisms given by
$$
D(\left(
   \begin{array}{cc}
     a & b \\
     0 & a^{-1} \\
   \end{array}
 \right))=\left(
           \begin{array}{cc}
             a^p &  q\frac{a^{2+p}-a^{-p}}{2(p+1)}-ba^{1+p}\\
             0 & a^{-p} \\
           \end{array}
         \right),~~\forall p, q\in\mathbb{R}~~\text{and}~~p\neq 0, -1,
         $$
are {\bf nonrigid}.
\item The other cases,
\begin{table}[htbp]
  \centering
    \begin{tabular}{|c|c|c|c|}
    \hline
    \multicolumn{1}{|c|}{Crossed homomorphisms} & Cases & Rigid   \bigstrut\\
    \hline
    \multicolumn{1}{|c|}{\multirow{2}{*}{$D\begin{pmatrix} a & b \\ 0 & a^{-1} \end{pmatrix}=\begin{pmatrix} a^{-1} & -b+qa\ln a \\ 0 & a \end{pmatrix}, \quad q\in\mathbb{R}$}} & $q=0$ & {\bf No}  \bigstrut\\
\cline{2-3}          & $q\neq 0$ & {\bf Yes}     \bigstrut\\
\cline{1-3}    \multicolumn{1}{|c|}{\multirow{2}{*}{$D\begin{pmatrix} a & b \\ 0 & a^{-1} \end{pmatrix}=\begin{pmatrix} 1 & \mu ab+\frac{\lambda}{2}(a^2-1) \\ 0 & 1 \end{pmatrix}, \quad \lambda, \mu\in\mathbb{R}$}} & $\mu=-1$ & {\bf No}  \bigstrut\\
\cline{2-3}          & $\mu\neq-1$ & {\bf Yes}  \bigstrut\\
    \hline
    \end{tabular}~~.
\end{table}
\end{itemize}

 \end{document}